\numberwithin{equation}{section}
\newtheorem{theorem}{\indent Theorem}[section]
\newtheorem{lemma}{\indent Lemma} [section]
\newtheorem{corollary}{\indent Corollary} [section]
\newtheorem{definition}{\indent Definition} [section]
\newtheorem{remark}{\indent Remark} [section]
  \theoremstyle{nonumberplain}
  \newtheorem{proof}{\indent Proof}
\DeclareMathOperator*{\essinf}{ess\, inf}
\DeclareMathOperator*{\esssup}{ess\, sup}
\def\rr{\mathbb{R}}
\def\rn{{\rr}^n}
\def\loc{{\rm loc}}
\begin{document}

\title{\bf Characterization of Lipschitz functions in terms of variable
exponent Lebesgue spaces \footnote{Supported by National Natural
Science Foundation of China (Grant Nos. 11571160 and 11471176).}
  }
\author{Pu Zhang   %%%%%\footnote{Corresponding author.} \  and XXXXXX
         \\
 {\small\it Department of Mathematics, Mudanjiang Normal
University, Mudanjiang 157011, P. R. China}\\
 {\small\it  E-mail:  puzhang@sohu.com %%%%% {\sc(P. Zhang)},
               %%%%% XXXXXX@xxxxx \sc{(XXXXXX)}
               }
    }
\date{ }
\maketitle

\begin{center}\begin{minipage}{14cm}

{\bf Abstract}~  Our aim is to characterize
the Lipschitz functions by variable exponent Lebesgue spaces.
We give some characterizations of the boundedness of the maximal
or nonlinear commutators of the Hardy-Littlewood maximal function
and sharp maximal function in variable exponent Lebesgue spaces
when the symbols $b$ belong to the Lipschitz spaces,
by which some new characterizations of Lipschitz spaces and
nonnegative Lipschitz functions are obtained.
Some equivalent relations between the Lipschitz norm and the
variable exponent Lebesgue norm are also given.

{\bf Keywords}~  Hardy-Littlewood maximal function; sharp maximal function;
fractional maximal function;
commutator; Lipschitz space; variable exponent Lebesgue space.

{\bf MR(2010) Subject Classification}~ 47B47, 42B25, 46E30, 42B20, 42B35, 26A16

\end{minipage}
\end{center}

\medskip

\section{Introduction and Results}

Let $T$ be the  classical singular integral operator, the commutator
$[b,T]$ generated by $T$ and a suitable function $b$ is given by
\begin{equation} \label{equ.1.1}    %--------------equ.1.1--------------------
[b,T](f)(x) = T\big((b(x)-b)f\big)(x) = b(x)T(f)(x)-T(bf)(x).
\end{equation}

A well known result due to Coifman, Rochberg and Weiss \cite{crw}
states that $[b,T]$ is bounded on $L^p(\rn)$ for $1<p<\infty$
when $b\in {BMO(\rn)}$. They also gave a characterization of $BMO(\rn)$
in virtue of the $L^p-$boundedness of the above commutator.
In 1978, Janson \cite{j} gave some characterizations of Lipschitz
space ${\dot{\Lambda}}_{\beta}(\rn)$ via commutator $[b,T]$ and proved
that $b\in {\dot{\Lambda}_{\beta}(\rn)} (0<\beta<1)$ if and only if
$[b,T]$ is bounded from $L^p(\rn)$ to $L^q(\rn)$ where $1<p<n/\beta$
and $1/p-1/q=\beta/n$ (see also Paluszy\'nski \cite{p}).

As usual, a cube $Q\subset \rn$ always means its sides parallel
to the coordinate axes. Denote by $|Q|$ the Lebesgue measure of $Q$
and $\chi_Q$ the characteristic function of $Q$. For
$f\in {L}_{\loc}^{1}(\rn)$, we write $f_Q={|Q|}^{-1}\int_Qf(x)dx$.
The Hardy-Littlewood maximal function $M$ is defined by
$$M(f)(x)=\sup_{Q\ni x} \frac{1}{|Q|} \int_{Q} |f(y)| dy,
$$
and the sharp maximal function $M^{\sharp}$ is defined by
$$M^{\sharp}f(x)=\sup_{Q\ni x} \frac{1}{|Q|} \int_{Q}
|f(y)-f_{Q}|{d}y,
$$
where the supremum is taken over all cubes $Q\subset \rn$
containing $x$.

The maximal commutator of $M$ with a locally integrable
function $b$ is defined by
$$M_b(f)(x)= M\big((b(x)-b)f\big)(x)
=\sup_{Q\ni x} \frac{1}{|Q|} \int_{Q} |b(x)-b(y)||f(y)|dy,
$$
where the supremum is taken over all cubes $Q\subset\rn$ containing
$x$.

The boundedness of the maximal commutator $M_b$ have been
studied intensively by many authors when the symbol $b$ belongs
to $BMO(\rn)$. See \cite{agkm, ghst, hly, hy, st1, zhw2} for instance.

Similar to (\ref{equ.1.1}), we can also define the (nonlinear)
commutators of $M$ and $M^{\sharp}$ with a locally
integrable function $b$ by
$$[b,M](f)=bM(f) -M(bf) \quad \hbox{and} \quad
[b,M^{\sharp}](f)= bM^{\sharp}(f) -M^{\sharp}(bf).
$$

We would like to remark that operators $M_b$ and $[b,M]$ essentially
differ from each other. For example, $M_b$ is positive and
sublinear, but $[b,M]$ is neither positive nor sublinear.

Using real interpolation techniques, Milman and Schonbek \cite{ms}
established a commutator result, by which they obtained
the $L^p-$boundedness of $[b,M]$ when $b \in {BMO(\rn)}$ and $b\ge 0$.
This operator can be used in studying the product of a function
in $H^1$ and a function in $BMO(\rn)$ (see \cite{bijz} for instance).
When the symbol $b$ belongs to $BMO(\rn)$,
Bastero, Milman and Ruiz \cite{bmr} characterized the
boundedness of $[b,M]$ and
$[b,M^{\sharp}]$ on $L^p$ spaces when $1<p<\infty$.
Zhang and Wu \cite{zhw1} obtained similar results for
the fractional maximal function. Agcayazi et al. \cite{agkm}
gave the end-point estimates for $[b,M]$.
In 2015, the author extended some of these results
to the multilinear setting in \cite{zh1}.

Zhang and Wu \cite{zhw3} studied the boundedness
of $[b,M]$ and $[b,M^{\sharp}]$ on variable exponent Lebesgue
spaces when $b\in{BMO(\rn)}$ and extended the results to the
fractional case in \cite{zhw2}.
Recently, the author \cite{zh2} gave some characterizations
for the boundedness of $M_b$ and $[b,M]$ on Lebesgue
and Morrey spaces when the symbols $b$ belong to Lipschitz
space, by which some new characterizations of Lipschitz and
nonnegative Lipschitz functions are given.

On the other hand, Ho \cite{ho} obtained some characterizations
of the $BMO$ and Lipschitz spaces by the norm of
rearrangement-invariant Banach function spaces. Izuki and Sawano
\cite{is} gave a characterization of $BMO(\rn)$ by using the norm
of variable exponent Lebesgue spaces.

Motivated by \cite{bmr}, \cite{ho}, \cite{is} and \cite{zh2},
we will study the characterization of Lipschitz
functions in the context of variable exponent Lebesgue spaces.
Firstly, we prove the boundedness of $M_b$, $[b,M]$ and $[b,M^{\sharp}]$
in variable exponent Lebesgue spaces when the symbols $b$ belong
to Lipschitz space, by which some new characterizations of
Lipschitz spaces and nonnegative Lipschitz functions are obtained.
Next, we give some equivalent relations between the
Lipschitz norm and the variable exponent Lebesgue norm.

To state our result, we first recall some notation and definitions.

\begin{definition}   \label{lip}
Let $0<\beta<1$, we say a function $b$ belongs to the Lipschitz
space $\dot{\Lambda}_{\beta}(\rn)$ if there exists a constant $C$
such that for all $x,y\in\rn$,
$$|b(x)-b(y)|\le {C}|x-y|^{\beta}.
$$
The smallest such constant $C$ is called the $\dot{\Lambda}_{\beta}$
norm of $b$ and is denoted by $\|b\|_{\dot{\Lambda}_{\beta}}$.
\end{definition}

%--------------------------------Definition variable exponent Lebesgue space --------------------------------
\begin{definition} \label{def.varLp}
Let $p(\cdot): \rn \to [1,\infty)$ be a measurable function. The
variable exponent Lebesgue space, $L^{p(\cdot)}(\rn)$, is defined
by
$$ L^{p(\cdot)}(\rn)=\bigg\{f~ \mbox{measurable}:
\int_{\rn} \bigg(\frac{|f(x)|}{\lambda}\bigg)^{p(x)} dx <\infty
~\mbox{for some constant}~ \lambda>0\bigg\}.
$$
\end{definition}
%--------------------------------end def.varLp--------------------------------
It is known that the set $L^{p(\cdot)}(\rn)$ becomes a
Banach space with respect to the norm
\begin{equation*}
\|f\|_{L^{p(\cdot)}(\rn)}=\inf \bigg\{ \lambda>0: \int_{\rn}
\bigg(\frac{|f(x)|}{\lambda} \bigg)^{p(x)} dx \le 1 \bigg\}.
\end{equation*}

The readers are referred to \cite{cf} and \cite{dhhr} for some
properties and applications of $L^{p(\cdot)}(\rn)$.

Denote by $\mathscr{P}(\rn)$ the set of all measurable functions
$p(\cdot): \rn\to[1,\infty)$ such that
$$1< p_{-}:=\essinf_{x\in \rn}p(x) ~~\mathrm{and}~~
{p_{+}:}=\esssup_{x\in \rn}p(x)<\infty,
$$
and by $\mathscr{B}(\rn)$ the set of all $p(\cdot) \in
\mathscr{P}(\rn)$ such that $M$ is bounded on
$L^{p(\cdot)}(\rn)$.

\begin{remark} \label{remark1}
For any $p(\cdot)\in \mathscr{B}(\rn)$ and $\lambda>1$, then,
by Jensen's inequality, we have ${\lambda} p(\cdot)\in \mathscr{B}(\rn)$.
See Remark 2.13 in \cite{cfmp}.
\end{remark}

For convenience, we introduce a notation $\mathscr{B}_{p,q}^{\alpha}(\rn)$
as follows.

\begin{definition}
We say an ordered pair of variable exponents
$(p(\cdot),q(\cdot)) \in \mathscr{B}_{p,q}^{\alpha}(\rn)$,
if $p(\cdot)\in \mathscr{P}(\rn)$, $0<{\alpha}<n/p_{+}$ and
$1/q(\cdot)=1/p(\cdot)-\alpha/n$ with $q(\cdot)(n-\alpha)/n \in
\mathscr{B}(\rn)$.
\end{definition}

\begin{remark} \label{remark2}
The condition $q(\cdot)(n-\alpha)/n \in \mathscr{B}(\rn)$ is
equivalent to saying that
there exists $q_0$ with $n/(n-\alpha)<q_0<\infty$
such that $q(\cdot)/q_0 \in \mathscr{B}(\rn)$. Moreover,
$q(\cdot)(n-\alpha)/n \in \mathscr{B}(\rn)$ implies
$q(\cdot) \in \mathscr{B}(\rn)$ by Remark \ref{remark1}.
See Remark 2.13 in \cite{cfmp} for details.
\end{remark}

Our results can be stated as follows.

%--------------------------------begin Theorem 1.1--------------------------------
\begin{theorem} \label{thm.mc-lp} %----------------------------------------Theorem 1.1-------------
Let $b$ be a locally integrable function and $0<\beta<1$,
then the following assertions are equivalent:

(1)\ \ $b\in {\dot{\Lambda}_{\beta}(\rn)}$.

(2)\ \ $M_b$ is bounded from $L^{p(\cdot)}(\rn)$ to $L^{q(\cdot)}(\rn)$
for all $(p(\cdot),q(\cdot)) \in \mathscr{B}_{p,q}^{\beta}(\rn)$.

(3)\ \ $M_b$ is bounded from $L^{p(\cdot)}(\rn)$ to
$L^{q(\cdot)}(\rn)$  for some $(p(\cdot),q(\cdot)) \in
\mathscr{B}_{p,q}^{\beta}(\rn)$.

(4)\ \  There exists some $(p(\cdot),q(\cdot)) \in
\mathscr{B}_{p,q}^{\beta}(\rn)$, such that
\begin{equation}     \label{equ.1.2}
\sup_{Q} \frac1{|Q|^{\beta/n}}
\frac{\|(b-b_{Q})\chi_{Q}\|_{L^{q(\cdot)}(\rn)}}
{\|\chi_{Q}\|_{L^{q(\cdot)}(\rn)}} <\infty.
\end{equation}

(5)\ \ For all $(p(\cdot),q(\cdot)) \in
\mathscr{B}_{p,q}^{\beta}(\rn)$, we have
$$\sup_{Q} \frac1{|Q|^{\beta/n}} \frac{\|(b- b_{Q})\chi_{Q}\|_{L^{q(\cdot)}(\rn)}}
{\|\chi_{Q}\|_{L^{q(\cdot)}(\rn)}} <\infty.
$$
\end{theorem}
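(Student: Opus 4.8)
The plan is to prove the five assertions equivalent through the cycle $(1)\Rightarrow(2)\Rightarrow(3)\Rightarrow(4)\Rightarrow(1)$ supplemented by $(1)\Rightarrow(5)\Rightarrow(4)$. Since an ``all pairs'' statement trivially implies the corresponding ``some pair'' statement, the implications $(2)\Rightarrow(3)$ and $(5)\Rightarrow(4)$ are immediate, and the real work reduces to $(1)\Rightarrow(2)$, $(3)\Rightarrow(4)$, $(4)\Rightarrow(1)$, together with the short step $(1)\Rightarrow(5)$. The whole scheme rests on one elementary observation: if $b\in\dot{\Lambda}_{\beta}(\rn)$, then for every cube $Q$ and every $y\in Q$,
$$|b(y)-b_{Q}|\le \frac1{|Q|}\int_Q|b(y)-b(z)|\,dz\le \|b\|_{\dot{\Lambda}_{\beta}}\frac1{|Q|}\int_Q|y-z|^{\beta}\,dz\le C\|b\|_{\dot{\Lambda}_{\beta}}|Q|^{\beta/n},$$
because the diameter of $Q$ is comparable to $|Q|^{1/n}$.

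For $(1)\Rightarrow(2)$ I would apply the same Lipschitz bound to the kernel of $M_b$: for $x,y$ lying in a common cube $Q$ one has $|b(x)-b(y)|\le\|b\|_{\dot{\Lambda}_{\beta}}|x-y|^{\beta}\le C\|b\|_{\dot{\Lambda}_{\beta}}|Q|^{\beta/n}$, so taking the supremum in the definition of $M_b$ gives the pointwise domination
$$M_b(f)(x)\le C\|b\|_{\dot{\Lambda}_{\beta}}\sup_{Q\ni x}|Q|^{\beta/n-1}\int_Q|f(y)|\,dy=C\|b\|_{\dot{\Lambda}_{\beta}}M_{\beta}(f)(x),$$
where $M_{\beta}$ is the fractional maximal operator of order $\beta$. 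The boundedness of $M_{\beta}$ from $L^{p(\cdot)}(\rn)$ to $L^{q(\cdot)}(\rn)$ for every $(p(\cdot),q(\cdot))\in\mathscr{B}_{p,q}^{\beta}(\rn)$ is precisely what the defining condition $q(\cdot)(n-\beta)/n\in\mathscr{B}(\rn)$ (equivalently, Remark \ref{remark2}) is designed to guarantee, and is a known result in the variable exponent setting; combining it with the domination yields (2). The same pointwise bound settles $(1)\Rightarrow(5)$ at once: from $|(b-b_{Q})\chi_Q|\le C\|b\|_{\dot{\Lambda}_{\beta}}|Q|^{\beta/n}\chi_Q$ one gets $\|(b-b_{Q})\chi_Q\|_{L^{q(\cdot)}(\rn)}\le C\|b\|_{\dot{\Lambda}_{\beta}}|Q|^{\beta/n}\|\chi_Q\|_{L^{q(\cdot)}(\rn)}$, which is (5), hence also (4).

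The two converse steps use the testing function $f=\chi_Q$ and two standard facts about characteristic functions in variable Lebesgue spaces: the relation $\|\chi_Q\|_{L^{p(\cdot)}(\rn)}\approx|Q|^{\beta/n}\|\chi_Q\|_{L^{q(\cdot)}(\rn)}$ coming from $1/q(\cdot)=1/p(\cdot)-\beta/n$, and the norm-conjugate comparison $\|\chi_Q\|_{L^{q(\cdot)}(\rn)}\|\chi_Q\|_{L^{q'(\cdot)}(\rn)}\approx|Q|$. For $(3)\Rightarrow(4)$, observe that for $x\in Q$ one has $M_b(\chi_Q)(x)\ge|Q|^{-1}\int_Q|b(x)-b(y)|\,dy\ge|b(x)-b_{Q}|$, whence $\|(b-b_{Q})\chi_Q\|_{L^{q(\cdot)}(\rn)}\le\|M_b(\chi_Q)\|_{L^{q(\cdot)}(\rn)}\le C\|\chi_Q\|_{L^{p(\cdot)}(\rn)}\approx C|Q|^{\beta/n}\|\chi_Q\|_{L^{q(\cdot)}(\rn)}$, which is (\ref{equ.1.2}). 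For $(4)\Rightarrow(1)$ I would apply the generalized H\"older inequality and the norm-conjugate comparison to control the mean oscillation,
$$\frac1{|Q|}\int_Q|b(y)-b_{Q}|\,dy\le\frac{C}{|Q|}\|(b-b_{Q})\chi_Q\|_{L^{q(\cdot)}(\rn)}\|\chi_Q\|_{L^{q'(\cdot)}(\rn)}\le C\,\frac{\|(b-b_{Q})\chi_Q\|_{L^{q(\cdot)}(\rn)}}{\|\chi_Q\|_{L^{q(\cdot)}(\rn)}},$$
so that (\ref{equ.1.2}) forces $|Q|^{-1-\beta/n}\int_Q|b-b_{Q}|\le C$ uniformly in $Q$; the Meyers-type characterization of $\dot{\Lambda}_{\beta}(\rn)$ by mean oscillation then delivers $b\in\dot{\Lambda}_{\beta}(\rn)$.

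The main obstacle is conceptual rather than computational: the deepest input is the mapping property of the fractional maximal operator $M_{\beta}$ on variable exponent spaces, which I am treating as a cited result and which is exactly the reason the class $\mathscr{B}_{p,q}^{\beta}(\rn)$ is defined as it is. Beyond that, the only delicate points are the two characteristic-function norm comparisons; locating them among the preliminary lemmas and verifying that the implied constants are independent of $Q$ is where one must be careful, since the ``some pair $\Leftrightarrow$ all pairs'' phenomenon depends entirely on those uniform estimates.
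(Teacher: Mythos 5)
Your proposal is correct and follows essentially the same route as the paper: the pointwise domination $M_b(f)\le C\|b\|_{\dot{\Lambda}_{\beta}}\mathfrak{M}_{\beta}(f)$ combined with the variable-exponent boundedness of the fractional maximal operator for $(1)\Rightarrow(2)$, testing on $\chi_Q$ with $|b(x)-b_Q|\le M_b(\chi_Q)(x)$ plus the estimate $\|\chi_Q\|_{L^{p(\cdot)}}\le C|Q|^{\beta/n}\|\chi_Q\|_{L^{q(\cdot)}}$ for $(3)\Rightarrow(4)$, and generalized H\"older together with $|Q|^{-1}\|\chi_Q\|_{L^{q(\cdot)}}\|\chi_Q\|_{L^{q'(\cdot)}}\le C$ and the mean-oscillation characterization of $\dot{\Lambda}_{\beta}$ for $(4)\Rightarrow(1)$. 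Your only deviation is proving $(1)\Rightarrow(5)$ directly from the pointwise bound $|(b-b_Q)\chi_Q|\le C\|b\|_{\dot{\Lambda}_{\beta}}|Q|^{\beta/n}\chi_Q$ (a slight simplification of the paper's route, which deduces $(5)$ from $(2)$ by the same testing argument as $(3)\Rightarrow(4)$); this is valid, and note that only the one-sided comparison $\|\chi_Q\|_{L^{p(\cdot)}}\lesssim|Q|^{\beta/n}\|\chi_Q\|_{L^{q(\cdot)}}$, not the equivalence you assert, is needed or supplied by the paper's lemmas.
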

%--------------------------------end Theorem 1.1--------------------------------

For a fixed cube $Q_0$, the Hardy-Littlewood maximal function
related to $Q_0$ is given by
$$M_{Q_0}(f)(x) =\sup_{Q_0\supseteq{Q}\ni{x}}
\frac{1}{|Q|}\int_{Q}|f(y)|dy,
$$
where the supremum is taken over all the cubes $Q$ with $Q\subseteq
{Q_0}$ and $Q\ni {x}$.

For the commutator $[b,M]$, we have the following result.

%--------------------------------begin Theorem 1.2--------------------------------
\begin{theorem} \label{thm.nc-lp}
Let $b$ be a locally integrable function and $0<\beta<1$, then the
following assertions are equivalent:

(1)\ \ $b\in \dot{\Lambda}_{\beta}(\rn)$ and $b\ge 0$.

(2)\ \  $[b,M]$ is bounded from $L^{p(\cdot)}(\rn)$ to
$L^{q(\cdot)}(\rn)$ for all $(p(\cdot),q(\cdot)) \in
\mathscr{B}_{p,q}^{\beta}(\rn)$.

(3)\ \  $[b,M]$ is bounded from $L^{p(\cdot)}(\rn)$ to
$L^{q(\cdot)}(\rn)$ for some $(p(\cdot),q(\cdot)) \in
\mathscr{B}_{p,q}^{\beta}(\rn)$.

(4)\ \  There exists some $(p(\cdot),q(\cdot)) \in
\mathscr{B}_{p,q}^{\beta}(\rn)$, such that
$$\sup_{Q} \frac1{|Q|^{\beta/n}}
\frac{\|(b-M_{Q}(b))\chi_{Q}\|_{L^{q(\cdot)}(\rn)}}
{\|\chi_{Q}\|_{L^{q(\cdot)}(\rn)}}
<\infty.
$$

(5)\ \ For all $(p(\cdot),q(\cdot)) \in
\mathscr{B}_{p,q}^{\beta}(\rn)$, we have
$$\sup_{Q} \frac1{|Q|^{\beta/n}}
\frac{\|(b- M_{Q}(b))\chi_{Q}\|_{L^{q(\cdot)}(\rn)}}
{\|\chi_{Q}\|_{L^{q(\cdot)}(\rn)}} <\infty.
$$
\end{theorem}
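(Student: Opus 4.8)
The plan is to prove the cyclic chain $(1)\Rightarrow(2)\Rightarrow(3)\Rightarrow(4)\Rightarrow(1)$ together with the two links $(2)\Rightarrow(5)$ and $(5)\Rightarrow(4)$, which closes the equivalence. Throughout I write $M_{\beta}f(x)=\sup_{Q\ni x}|Q|^{\beta/n-1}\int_{Q}|f(y)|\,dy$ for the fractional maximal operator and freely use the preliminary facts that, for $(p(\cdot),q(\cdot))\in\mathscr{B}_{p,q}^{\beta}(\rn)$, the operator $M_{\beta}$ is bounded from $L^{p(\cdot)}(\rn)$ to $L^{q(\cdot)}(\rn)$, that $q(\cdot)\in\mathscr{B}(\rn)$ (Remark \ref{remark2}), and the cube-norm comparabilities $\|\chi_{Q}\|_{L^{q(\cdot)}}\|\chi_{Q}\|_{L^{q'(\cdot)}}\approx|Q|$ and $\|\chi_{Q}\|_{L^{p(\cdot)}}/\|\chi_{Q}\|_{L^{q(\cdot)}}\approx|Q|^{\beta/n}$.

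For $(1)\Rightarrow(2)$ the key is a pointwise domination valid precisely because $b\ge0$. Writing out the definitions and using $b\ge 0$ one checks the two one-sided bounds $b(x)Mf(x)\le M_{b}f(x)+M(bf)(x)$ and $M(bf)(x)\le M_{b}f(x)+b(x)Mf(x)$, which together give $|[b,M]f(x)|\le M_{b}f(x)$. Since $b\in\dot{\Lambda}_{\beta}(\rn)$ forces $|b(x)-b(y)|\le C\|b\|_{\dot{\Lambda}_{\beta}}|Q|^{\beta/n}$ for $x,y$ in a common cube $Q$, we get $M_{b}f(x)\le C\|b\|_{\dot{\Lambda}_{\beta}}M_{\beta}f(x)$, and boundedness of $M_{\beta}$ from $L^{p(\cdot)}$ to $L^{q(\cdot)}$ finishes this implication. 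The step $(2)\Rightarrow(3)$ is trivial, and $(2)\Rightarrow(5)$ follows from the same testing computation used below, applied to every admissible pair.

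The implication $(3)\Rightarrow(4)$ is proved by testing on characteristic functions. The crucial point is that for a fixed cube $Q$ and $x\in Q$ one has $M\chi_{Q}(x)=1$ and $M(b\chi_{Q})(x)=M_{Q}(b)(x)$; the latter is seen by checking that any cube $Q'\ni x$ can be replaced, without decreasing the relevant average of $|b|$, by a subcube of $Q$ containing $x$ (slide and resize when $Q'$ protrudes from $Q$, or take $Q$ itself when $\ell(Q')>\ell(Q)$). Hence $[b,M]\chi_{Q}(x)=b(x)-M_{Q}(b)(x)$ on $Q$, so that $\|(b-M_{Q}b)\chi_{Q}\|_{L^{q(\cdot)}}\le\|[b,M]\chi_{Q}\|_{L^{q(\cdot)}}\le\|[b,M]\|\,\|\chi_{Q}\|_{L^{p(\cdot)}}$; dividing by $|Q|^{\beta/n}\|\chi_{Q}\|_{L^{q(\cdot)}}$ and invoking $\|\chi_{Q}\|_{L^{p(\cdot)}}/\|\chi_{Q}\|_{L^{q(\cdot)}}\approx|Q|^{\beta/n}$ yields (4) (and, run over all pairs, (5)).

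The heart of the argument, and the step I expect to be hardest, is $(4)\Rightarrow(1)$, where both $b\ge0$ and membership in $\dot{\Lambda}_{\beta}$ must be recovered from a single scalar bound. The Lebesgue differentiation theorem gives $M_{Q}b(x)\ge|b(x)|$ a.e. on $Q$, whence $M_{Q}b-b\ge|b|-b=2b^{-}$; combining with the generalized Hölder inequality and $\|\chi_{Q}\|_{L^{q(\cdot)}}\|\chi_{Q}\|_{L^{q'(\cdot)}}\approx|Q|$ gives $|Q|^{-1}\int_{Q}b^{-}\le C|Q|^{\beta/n}$, and letting $Q$ shrink to a Lebesgue point forces $b^{-}=0$, i.e. $b\ge0$. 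With $b\ge0$ in hand, $M_{Q}b\ge b$ and $M_{Q}b\ge b_{Q}$ on $Q$ give $|b(x)-b_{Q}|\le(M_{Q}b(x)-b(x))+(M_{Q}b(x)-b_{Q})$, which integrates to $\int_{Q}|b-b_{Q}|\le2\int_{Q}(M_{Q}b-b)$. Dividing by $|Q|^{1+\beta/n}$ and using the average bound just derived yields $\sup_{Q}|Q|^{-1-\beta/n}\int_{Q}|b-b_{Q}|<\infty$, which by the Campanato-type characterization of $\dot{\Lambda}_{\beta}(\rn)$ (for $0<\beta<1$) gives $b\in\dot{\Lambda}_{\beta}(\rn)$. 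Finally $(5)\Rightarrow(4)$ is immediate since $\mathscr{B}_{p,q}^{\beta}(\rn)$ is nonempty, closing the chain. The technical care concentrates in the identity $M(b\chi_{Q})=M_{Q}b$ on $Q$ and in making the shrinking-cube and Campanato arguments rigorous in the variable-exponent norm, for which the cube-norm comparabilities are the essential tool.
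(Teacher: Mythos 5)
Your proposal is correct and follows essentially the same route as the paper: the same pointwise domination $|[b,M]f|\le M_b f\le C\|b\|_{\dot{\Lambda}_{\beta}}\mathfrak{M}_{\beta}f$ for $(1)\Rightarrow(2)$, the same testing on $\chi_Q$ with the identities $M(\chi_Q)=\chi_Q$ and $M(b\chi_Q)=M_Q(b)$ on $Q$ for $(3)\Rightarrow(4)$, and the same H\"older-plus-cube-norm estimates (Lemmas \ref{lem.holder}, \ref{lem.cube}, \ref{lem.cube2}) throughout.

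The one genuine (though minor) divergence is inside $(4)\Rightarrow(1)$, where you reverse the paper's order. The paper first proves $b\in\dot{\Lambda}_{\beta}(\rn)$ without knowing $b\ge 0$, using the level-set symmetry trick of Bastero--Milman--Ruiz: with $E=\{x\in Q: b(x)\le b_Q\}$ one has $\int_E|b-b_Q|=\int_{Q\setminus E}|b-b_Q|$ and $|b-b_Q|\le|b-M_Q(b)|$ on $E$, giving $\int_Q|b-b_Q|\le 2\int_Q|b-M_Q(b)|$; nonnegativity is deduced afterwards. You instead prove $b\ge 0$ first (via $M_Q b\ge|b|$ a.e., so $M_Q b-b\ge 2b^-$, then shrink cubes), and only then derive the Campanato bound from the pointwise inequality $|b-b_Q|\le(M_Q b-b)+(M_Q b-b_Q)$, which is valid once $M_Q b\ge b$ a.e. Both arguments rest on the same two facts ($M_Q b\ge|b|$ a.e.\ and $M_Q b\ge b_Q$ on $Q$) and yield the same constants, so nothing is lost either way; the paper's ordering has the slight advantage that the Lipschitz conclusion is independent of the sign of $b$, which is what makes Corollary \ref{cor-thm.nc-lp}(i) immediate. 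One small point of rigor you glossed over (as does the paper, by restricting to $Mf(x)<\infty$): the algebra behind $|[b,M]f|\le M_b f$ must be carried out only where $Mf(x)$ is finite, to avoid an $\infty-\infty$ expression.
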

%--------------------------------end Theorem 1.2--------------------------------

For the commutator $[b,M^{\sharp}]$, we have the following result.

%--------------------------------Theorem 1.3--------------------------------
\begin{theorem}\label{thm.nc-sharp}
Let $b$ be a locally integrable function and $0<\beta<1$, then the
following assertions are equivalent:

(1)\ \ $b\in \dot{\Lambda}_{\beta}(\rn)$ and $b\ge 0$.

(2)\ \  $[b,M^{\sharp}]$ is bounded from $L^{p(\cdot)}(\rn)$ to
$L^{q(\cdot)}(\rn)$ for all $(p(\cdot),q(\cdot)) \in
\mathscr{B}_{p,q}^{\beta}(\rn)$.

(3)\ \  $[b,M^{\sharp}]$ is bounded from $L^{p(\cdot)}(\rn)$ to
$L^{q(\cdot)}(\rn)$ for some $(p(\cdot),q(\cdot)) \in
\mathscr{B}_{p,q}^{\beta}(\rn)$.

(4)\ \  There exists some $(p(\cdot),q(\cdot)) \in
\mathscr{B}_{p,q}^{\beta}(\rn)$, such that
$$\sup_{Q} \frac1{|Q|^{\beta/n}}
\frac{\|(b-2M^{\sharp}(b\chi_Q))\chi_Q\|_{L^{q(\cdot)}(\rn)}}
{\|\chi_Q\|_{L^{q(\cdot)} (\rn)}} <\infty.
$$

(5)\ \ For all $(p(\cdot),q(\cdot)) \in
\mathscr{B}_{p,q}^{\beta}(\rn)$, we have
$$\sup_{Q} \frac1{|Q|^{\beta/n}}
\frac{\|(b-2M^{\sharp}(b\chi_Q))\chi_Q\|_{L^{q(\cdot)}(\rn)}}
{\|\chi_Q\|_{L^{q(\cdot)} (\rn)}}
<\infty.
$$
\end{theorem}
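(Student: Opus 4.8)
The plan is to establish the five equivalences through the cycle $(1)\Rightarrow(2)\Rightarrow(3)\Rightarrow(4)\Rightarrow(1)$ supplemented by $(2)\Rightarrow(5)$, so that all statements collapse; here $(2)\Rightarrow(3)$ and $(5)\Rightarrow(4)$ are trivial (``for all'' implies ``for some'', the class $\mathscr{B}_{p,q}^{\beta}(\rn)$ being nonempty since it contains constant exponents). The backbone of everything is a pointwise identity on a cube: since $M^{\sharp}(\chi_Q)(x)=1/2$ for every $x\in Q$ (the value $\le 1/2$ holds for all $x$, and is attained by any cube $R\supseteq Q$ with $|R|=2|Q|$, for which $(\chi_Q)_R=1/2$), one gets for $x\in Q$
\[
(b-2M^{\sharp}(b\chi_Q))(x)=2[b,M^{\sharp}](\chi_Q)(x).
\]

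For $(1)\Rightarrow(2)$ I would first record the pointwise control $|[b,M^{\sharp}]f(x)|\le 2M_b f(x)$, which holds precisely because $b\ge0$. Expanding $b(y)f(y)-(bf)_Q$ and $b(x)(f(y)-f_Q)$ and averaging over a cube $Q\ni x$ yields the symmetric bound $\big|M^{\sharp}(bf)(x)-|b(x)|M^{\sharp}f(x)\big|\le 2M_bf(x)$, and under $b\ge0$ one has $|b(x)|=b(x)$. Since $|b(x)-b(y)|\le\|b\|_{\dot{\Lambda}_{\beta}}|x-y|^{\beta}\lesssim\|b\|_{\dot{\Lambda}_{\beta}}|Q|^{\beta/n}$ for $x,y\in Q$, it follows that $M_bf(x)\lesssim\|b\|_{\dot{\Lambda}_{\beta}}M_{\beta}f(x)$, where $M_{\beta}f(x)=\sup_{Q\ni x}|Q|^{\beta/n-1}\int_Q|f|$ is the fractional maximal operator. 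The boundedness of $M_{\beta}$ from $L^{p(\cdot)}(\rn)$ to $L^{q(\cdot)}(\rn)$ for $(p(\cdot),q(\cdot))\in\mathscr{B}_{p,q}^{\beta}(\rn)$ (a known estimate in this setting) then delivers (2) for every admissible pair.

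For $(3)\Rightarrow(4)$ and $(2)\Rightarrow(5)$ I would test on $f=\chi_Q$. The identity above gives $\|(b-2M^{\sharp}(b\chi_Q))\chi_Q\|_{L^{q(\cdot)}}=2\|([b,M^{\sharp}]\chi_Q)\chi_Q\|_{L^{q(\cdot)}}\le 2\|[b,M^{\sharp}]\|\,\|\chi_Q\|_{L^{p(\cdot)}}$, and the standard comparison $\|\chi_Q\|_{L^{p(\cdot)}}\approx|Q|^{\beta/n}\|\chi_Q\|_{L^{q(\cdot)}}$ (a consequence of $1/q(\cdot)=1/p(\cdot)-\beta/n$) converts this into the required uniform bound; fixing the pair yields (4), keeping it arbitrary yields (5).

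The substance is $(4)\Rightarrow(1)$. From (4), generalized H\"older together with $\|\chi_Q\|_{L^{q(\cdot)}}\|\chi_Q\|_{L^{q'(\cdot)}}\approx|Q|$ gives the averaged estimate $D_Q:=\frac{1}{|Q|}\int_Q|b-2M^{\sharp}(b\chi_Q)|\lesssim|Q|^{\beta/n}$. Nonnegativity follows at once: writing $b^{-}=\max(-b,0)$, we have $b^{-}\le|b-2M^{\sharp}(b\chi_Q)|$ on $Q$ because $2M^{\sharp}(b\chi_Q)\ge0$, hence $\frac{1}{|Q|}\int_Q b^{-}\le D_Q\to0$ as $Q\downarrow x_0$, and the Lebesgue differentiation theorem forces $b^{-}=0$ a.e. The crucial step for the Lipschitz bound is the pointwise inequality $2M^{\sharp}(b\chi_Q)(x)\ge b_Q$ for $x\in Q$ when $b\ge0$, which I would prove by inserting into the definition of $M^{\sharp}$ a cube $R\supseteq Q$ with $|R|=2|Q|$: then $(b\chi_Q)_R=b_Q/2$ and
\[
\int_R|b\chi_Q-\tfrac{b_Q}{2}|=\int_Q|b-\tfrac{b_Q}{2}|+\tfrac{b_Q}{2}|Q|\ge\int_Q(b-\tfrac{b_Q}{2})+\tfrac{b_Q}{2}|Q|=b_Q|Q|,
\]
so $M^{\sharp}(b\chi_Q)(x)\ge b_Q/2$. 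Granting this, on the set $\{x\in Q:b(x)<b_Q\}$ we have $|b-2M^{\sharp}(b\chi_Q)|\ge b_Q-b=|b-b_Q|$, whence by the symmetrization $\frac{1}{|Q|}\int_Q|b-b_Q|=\frac{2}{|Q|}\int_{\{b<b_Q\}}(b_Q-b)$ we obtain $\frac{1}{|Q|}\int_Q|b-b_Q|\le 2D_Q\lesssim|Q|^{\beta/n}$; the Campanato-type characterization of $\dot{\Lambda}_{\beta}(\rn)$ then gives $b\in\dot{\Lambda}_{\beta}(\rn)$. I expect the main obstacle to be exactly this lemma $2M^{\sharp}(b\chi_Q)\ge b_Q$: it is what isolates the genuine oscillation of $b$ from the boundary-jump baseline of $M^{\sharp}(b\chi_Q)$, and it is essential because a pointwise domination of $|b-b_Q|$ by $|b-2M^{\sharp}(b\chi_Q)|$ simply fails (as the example $b=N\chi_{[0,1/N]}$ on $Q=[0,1]$ shows, where $b-2M^{\sharp}(b\chi_Q)$ vanishes on $[0,1/N]$ while $b-b_Q$ does not), so the oscillation must instead be recovered globally through this inequality and the nonnegativity of $b$.
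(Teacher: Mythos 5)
Your proposal is correct, and its skeleton coincides with the paper's: the same implication cycle, the same key identity $b-2M^{\sharp}(b\chi_Q)=2[b,M^{\sharp}](\chi_Q)$ on $Q$ (from $M^{\sharp}(\chi_Q)=1/2$ there), the same pointwise bound $|[b,M^{\sharp}]f|\le 2M_bf\le C\|b\|_{\dot{\Lambda}_{\beta}}\mathfrak{M}_{\beta}f$ for $b\ge 0$ combined with the variable-exponent boundedness of $\mathfrak{M}_{\beta}$, and the same test-function/H\"older-duality computations (note that only the one-sided estimate $\|\chi_Q\|_{L^{p(\cdot)}(\rn)}\le C|Q|^{\beta/n}\|\chi_Q\|_{L^{q(\cdot)}(\rn)}$ of the paper's Lemma \ref{lem.cube2} is available and is all you need; the two-sided comparison you assert is an overstatement, though harmless). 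Where you genuinely differ is inside $(4)\Rightarrow(1)$. The paper proves the Lipschitz estimate first, using the inequality $|b_Q|\le 2M^{\sharp}(b\chi_Q)$ on $Q$, valid for arbitrary $b$ and cited from Bastero--Milman--Ruiz, and only afterwards obtains $b\ge 0$ by a limiting argument: from $2M^{\sharp}(b\chi_Q)-b\ge |b_Q|-b^{+}+b^{-}$ and the averaged bound it deduces $|b_Q|-(b^{+})_Q+(b^{-})_Q\le C|Q|^{\beta/n}$, whose left-hand side tends to $2b^{-}(x)$ as $Q$ shrinks to $x$. You reverse the order: nonnegativity comes at once from the pointwise bound $b^{-}\le |b-2M^{\sharp}(b\chi_Q)|$ on $Q$ (using only $M^{\sharp}\ge 0$) plus Lebesgue differentiation, which is simpler than the paper's step; then, knowing $b\ge 0$, you need only the sign-restricted inequality $2M^{\sharp}(b\chi_Q)\ge b_Q$, which you prove self-contained by the doubled-cube computation (essentially the proof behind the paper's cited inequality, with $b_Q$ in place of $|b_Q|$). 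Both routes conclude identically via the symmetrization $\int_Q|b-b_Q|=2\int_{\{b<b_Q\}}(b_Q-b)$ and the Campanato characterization of $\dot{\Lambda}_{\beta}(\rn)$. Your ordering buys a shorter, more transparent nonnegativity argument and a weaker key lemma; the paper's ordering buys a Lipschitz estimate that requires no sign information on $b$, so the same display can be recycled for the $b^{-}$ argument.
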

%--------------------------------Theorem 1.3--------------------------------

\begin{remark}
Results similar to the ones of
Theorems \ref{thm.mc-lp}\---\ref{thm.nc-sharp}
were established in \cite{zhw3} when $b\in{BMO(\rn)}$,
which may be viewed as the end-point case of $\beta=0$.
When $p$ and $q$ are constants, Theorems \ref{thm.mc-lp}
and \ref{thm.nc-lp} were obtained in \cite{zh2}.
\end{remark}

\begin{remark}
We would like to note that some idea of the proof
of Theorems \ref{thm.nc-lp} and \ref{thm.nc-sharp} comes
from \cite{bmr} and \cite{zh2}.
\end{remark}

Next, we characterize the Lipschitz functions by using the norm
of variable exponent Lebesgue spaces. We introduce the following
three classes of functions for convenience.

\begin{definition}       \label{def.1.4}
Let $0<\beta<1$ and $q(\cdot)\in \mathscr{B}(\rn)$. We define
$${\dot{\Lambda}_{\beta,q(\cdot)}(\rn)} =\bigg\{b\in{L^1_{\rm{loc}}(\rn)}:
\|b\|_{\dot{\Lambda}_{\beta,q(\cdot)}} =
\sup_{Q} \frac1{|Q|^{\beta/n}} \frac{\|(b- b_{Q})\chi_{Q}\|_{L^{q(\cdot)}(\rn)}}
{\|\chi_{Q}\|_{L^{q(\cdot)}(\rn)}} <\infty \bigg\},
$$
$${\dot{\Lambda}_{\beta,q(\cdot)}^{\ast}(\rn)} =\bigg\{b\in{L^1_{\rm{loc}}(\rn)}:
\|b\|_{\dot{\Lambda}_{\beta,q(\cdot)}^{\ast}}= \sup_{Q} \frac1{|Q|^{\beta/n}}
\frac{\|(b- M_{Q}(b))\chi_{Q}\|_{L^{q(\cdot)}(\rn)}}
{\|\chi_{Q}\|_{L^{q(\cdot)}(\rn)}} <\infty \bigg\},
$$
and
$${\dot{\Lambda}_{\beta,q(\cdot)}^{\sharp}(\rn)}=\bigg\{b\in{L^1_{\rm{loc}}(\rn)}:
\|b\|_{\dot{\Lambda}_{\beta,q(\cdot)}^{\sharp}}= \sup_{Q} \frac1{|Q|^{\beta/n}}
\frac{\|(b-2M^{\sharp}(b\chi_Q))\chi_Q\|_{L^{q(\cdot)}(\rn)}}
{\|\chi_Q\|_{L^{q(\cdot)}(\rn)}} <\infty \bigg\}.
$$
\end{definition}

%--------------------------------begin Theorem 1.4--------------------------------
\begin{theorem} \label{thm.equiv-norm-1} %----------------------------------------Theorem 1.4-------------
Let $0<\beta<1$. Then
${\dot{\Lambda}_{\beta}(\rn)}={\dot{\Lambda}_{\beta,q(\cdot)}(\rn)}$
 for all $q(\cdot)\in \mathscr{B}(\rn)$.
Furthermore, there exist positive constants $C_1$ and $C_2$
such that, for all $b\in {\dot{\Lambda}_{\beta}(\rn)}$,
$$C_1\|b\|_{\dot{\Lambda}_{\beta}} \le
\|b\|_{\dot{\Lambda}_{\beta,q(\cdot)}}
\le{C_2}\|b\|_{\dot{\Lambda}_{\beta}}.
$$
\end{theorem}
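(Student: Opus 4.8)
The plan is to establish the set equality by proving the two-sided norm estimate, which yields both inclusions at once. I would treat the upper bound $\|b\|_{\dot{\Lambda}_{\beta,q(\cdot)}} \le C_2 \|b\|_{\dot{\Lambda}_{\beta}}$ first, since it is the routine direction. Fix $b \in \dot{\Lambda}_{\beta}(\rn)$ and a cube $Q$. For $x \in Q$, averaging the defining inequality of the Lipschitz norm over $y \in Q$ gives $|b(x) - b_Q| \le |Q|^{-1}\int_Q |b(x)-b(y)|\,dy \le \|b\|_{\dot{\Lambda}_{\beta}}\, |Q|^{-1}\int_Q |x-y|^{\beta}\,dy$. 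Since $x,y\in Q$ forces $|x-y| \le \sqrt{n}\,\ell(Q)$ and $\ell(Q)^{\beta} = |Q|^{\beta/n}$, this yields the pointwise bound $|(b-b_Q)\chi_Q(x)| \le C\|b\|_{\dot{\Lambda}_{\beta}}|Q|^{\beta/n}\chi_Q(x)$. By monotonicity of the variable exponent norm one gets $\|(b-b_Q)\chi_Q\|_{L^{q(\cdot)}(\rn)} \le C\|b\|_{\dot{\Lambda}_{\beta}}|Q|^{\beta/n}\|\chi_Q\|_{L^{q(\cdot)}(\rn)}$; dividing by $|Q|^{\beta/n}\|\chi_Q\|_{L^{q(\cdot)}(\rn)}$ and taking the supremum over $Q$ gives the claim.

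For the reverse bound $C_1\|b\|_{\dot{\Lambda}_{\beta}} \le \|b\|_{\dot{\Lambda}_{\beta,q(\cdot)}}$, the idea is to pass from the variable exponent oscillation to the classical $L^1$ mean oscillation and then invoke the Campanato-type description of Lipschitz spaces. Using the generalized H\"older inequality on $L^{q(\cdot)}(\rn)$ together with the well-known equivalence $\|\chi_Q\|_{L^{q(\cdot)}(\rn)}\|\chi_Q\|_{L^{q'(\cdot)}(\rn)} \approx |Q|$, valid for $q(\cdot)\in\mathscr{B}(\rn)$, I would estimate $|Q|^{-1}\int_Q |b(x)-b_Q|\,dx \le C|Q|^{-1}\|(b-b_Q)\chi_Q\|_{L^{q(\cdot)}(\rn)}\|\chi_Q\|_{L^{q'(\cdot)}(\rn)} \le C\,\|(b-b_Q)\chi_Q\|_{L^{q(\cdot)}(\rn)}/\|\chi_Q\|_{L^{q(\cdot)}(\rn)}$. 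Multiplying by $|Q|^{-\beta/n}$ and taking the supremum over all cubes produces $\sup_Q |Q|^{-\beta/n}\,|Q|^{-1}\int_Q|b-b_Q|\,dx \le C\|b\|_{\dot{\Lambda}_{\beta,q(\cdot)}}$. It then remains to recall the classical fact that, for $0<\beta<1$, this left-hand quantity is comparable to $\|b\|_{\dot{\Lambda}_{\beta}}$ (the integral Campanato characterization of H\"older continuity), which closes the estimate.

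The main obstacle is the reverse direction, and within it the crucial input is the duality relation $\|\chi_Q\|_{L^{q(\cdot)}(\rn)}\|\chi_Q\|_{L^{q'(\cdot)}(\rn)} \approx |Q|$, since this is precisely what lets the normalizing factor $\|\chi_Q\|_{L^{q(\cdot)}(\rn)}$ be traded for a genuine average over $Q$; this is where the hypothesis $q(\cdot)\in\mathscr{B}(\rn)$ (equivalently, boundedness of $M$ on both $L^{q(\cdot)}(\rn)$ and $L^{q'(\cdot)}(\rn)$) enters. The remaining nontrivial ingredient is the equivalence between the $L^1$ Campanato seminorm and the Lipschitz norm, which is classical and independent of the variable exponent machinery. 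Once these two standard facts are in hand, both inequalities follow by the elementary manipulations above, and since the constants $C_1$ and $C_2$ do not depend on $b$, the norm equivalence, and hence the set equality $\dot{\Lambda}_{\beta}(\rn) = \dot{\Lambda}_{\beta,q(\cdot)}(\rn)$ for every $q(\cdot)\in\mathscr{B}(\rn)$, follows at once.
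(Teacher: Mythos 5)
Your proposal is correct, and the two halves of it relate to the paper's proof in different ways. For the lower bound $C_1\|b\|_{\dot{\Lambda}_{\beta}} \le \|b\|_{\dot{\Lambda}_{\beta,q(\cdot)}}$ you argue exactly as the paper does (its Corollary \ref{cor-thm.mc-lp}(i), i.e.\ the step $(4)\Rightarrow(1)$ of Theorem \ref{thm.mc-lp}): generalized H\"older's inequality, Izuki's estimate $\|\chi_Q\|_{L^{q(\cdot)}(\rn)}\|\chi_Q\|_{L^{q'(\cdot)}(\rn)}\le C|Q|$ from Lemma \ref{lem.cube}, and the Campanato-type characterization of Lemma \ref{lem.lip}. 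The genuine difference is in the upper bound. The paper cannot invoke its Corollary \ref{cor-thm.mc-lp}(ii) directly, since that requires $q(\cdot)$ to admit a partner $p(\cdot)$ with $(p(\cdot),q(\cdot))\in\mathscr{B}^{\beta}_{p,q}(\rn)$, forcing $q(\cdot)>n/(n-\beta)$; so it chooses $r>n/(n-\beta)$, passes to the auxiliary exponent $q_0(\cdot)=rq(\cdot)\in\mathscr{B}(\rn)$ (Remark \ref{remark1}), applies the corollary there --- which ultimately rests on the boundedness of the fractional maximal operator $\mathfrak{M}_{\beta}$ on variable Lebesgue spaces (Lemma \ref{lem.fractional}) --- and then returns to $q(\cdot)$ via H\"older's inequality (Lemma \ref{lem.holder}(ii)) and the identity $\|\chi_Q\|_{L^{sq(\cdot)}(\rn)}=\|\chi_Q\|^{1/s}_{L^{q(\cdot)}(\rn)}$ from Lemma \ref{lem.s-norm}. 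You instead prove the pointwise bound $|(b-b_Q)\chi_Q(x)|\le C\|b\|_{\dot{\Lambda}_{\beta}}|Q|^{\beta/n}\chi_Q(x)$ and use only the monotonicity (lattice property) of the Luxemburg norm. This is a real simplification: it bypasses Lemma \ref{lem.fractional} and the exponent-raising trick altogether, and it makes visible that the inequality $\|b\|_{\dot{\Lambda}_{\beta,q(\cdot)}}\le C\|b\|_{\dot{\Lambda}_{\beta}}$ needs no hypothesis on $q(\cdot)$ at all beyond measurability (it holds in any Banach function space), so that $q(\cdot)\in\mathscr{B}(\rn)$ enters only through the converse inequality. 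What the paper's heavier route buys is architectural economy --- the norm characterization falls out of the commutator theorems it has already proved --- whereas your argument is self-contained and more elementary. One small caveat: your parenthetical claim that $q(\cdot)\in\mathscr{B}(\rn)$ is equivalent to boundedness of $M$ on both $L^{q(\cdot)}(\rn)$ and $L^{q'(\cdot)}(\rn)$ is a nontrivial theorem (due to Diening) and is not actually needed; Lemma \ref{lem.cube} assumes only $q(\cdot)\in\mathscr{B}(\rn)$, which is all your estimate uses.
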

%--------------------------------end Theorem 1.4--------------------------------

\begin{remark}
Similar characterization for $BMO$ functions, which can be
viewed as $\beta=0$,
is proved by Izuki in \cite{i2} (see also \cite{is}).
We would like to remark that Ho \cite{ho} obtained a
characterization of Lipschitz space with respect to
rearrangement invariant Banach spaces. However, the variable
exponent Lebsegue spaces are not rearrangement invariant.
\end{remark}

Denoted by
${\dot{\Lambda}_{\beta}^{+}(\rn)}:=\{b: 0\le{b}\in \dot{\Lambda}_{\beta}(\rn)\}$,
the set of all nonnegative Lipschitz functions. We have the following
characterizations of ${\dot{\Lambda}_{\beta}^{+}(\rn)}$ in terms of
variable exponent Lebesgue norm.

%--------------------------------begin Theorem 1.5--------------------------------
\begin{theorem} \label{thm.equiv-norm-2} %----------------------------------------Theorem 1.5-------------
Let $0<\beta<1$. Then
${\dot{\Lambda}_{\beta}^{+}(\rn)}={\dot{\Lambda}_{\beta,q(\cdot)}^{\ast}(\rn)}$
for all $q(\cdot)\in \mathscr{B}(\rn)$.
Furthermore, there exist positive constants $C_1$ and $C_2$
such that, for all $b\in {\dot{\Lambda}_{\beta}^{+}(\rn)}$,
$$C_1\|b\|_{\dot{\Lambda}_{\beta}} \le
\|b\|_{\dot{\Lambda}_{\beta,q(\cdot)}^{\ast}}
\le{C_2}\|b\|_{\dot{\Lambda}_{\beta}}.
$$
\end{theorem}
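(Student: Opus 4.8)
The plan is to establish the set identity together with the two-sided norm estimate by proving both inclusions. Although the conclusion resembles the equivalence $(1)\Leftrightarrow(5)$ of Theorem \ref{thm.nc-lp}, here $q(\cdot)$ ranges over all of $\mathscr{B}(\rn)$ rather than over the second component of an admissible pair in $\mathscr{B}_{p,q}^{\beta}(\rn)$, so I would argue directly, using only that $M$ is bounded on $L^{q(\cdot)}(\rn)$ and the generalized H\"older inequality for variable Lebesgue spaces, namely $\frac{1}{|Q|}\int_Q|f|\,dy\le C\,\|f\chi_Q\|_{L^{q(\cdot)}(\rn)}/\|\chi_Q\|_{L^{q(\cdot)}(\rn)}$, which follows from $\int_Q|f|\le C\|f\chi_Q\|_{L^{q(\cdot)}}\|\chi_Q\|_{L^{q'(\cdot)}}$ together with the standard relation $\|\chi_Q\|_{L^{q(\cdot)}}\|\chi_Q\|_{L^{q'(\cdot)}}\approx|Q|$ (see \cite{is,zhw3}).

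For the inclusion $\dot{\Lambda}_{\beta}^{+}(\rn)\subseteq\dot{\Lambda}_{\beta,q(\cdot)}^{\ast}(\rn)$ and the bound $\|b\|_{\dot{\Lambda}_{\beta,q(\cdot)}^{\ast}}\le C_2\|b\|_{\dot{\Lambda}_{\beta}}$, I would fix $b\in\dot{\Lambda}_{\beta}^{+}(\rn)$ and a cube $Q$. Since $b\ge0$, the Lebesgue differentiation theorem gives $M_Q(b)(x)\ge b(x)$ a.e.\ on $Q$, so $|b-M_Q(b)|=M_Q(b)-b$ there. For any cube $R$ with $x\in R\subseteq Q$ the Lipschitz condition yields $\frac{1}{|R|}\int_R b(y)\,dy-b(x)\le\frac{1}{|R|}\int_R|b(y)-b(x)|\,dy\le\|b\|_{\dot{\Lambda}_{\beta}}(\operatorname{diam}Q)^{\beta}\le c_n\|b\|_{\dot{\Lambda}_{\beta}}|Q|^{\beta/n}$; taking the supremum over such $R$ gives $0\le M_Q(b)(x)-b(x)\le c_n\|b\|_{\dot{\Lambda}_{\beta}}|Q|^{\beta/n}$ for a.e.\ $x\in Q$. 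Multiplying by $\chi_Q$, taking the $L^{q(\cdot)}$ norm, dividing by $|Q|^{\beta/n}\|\chi_Q\|_{L^{q(\cdot)}}$, and taking the supremum over $Q$ then yields the upper bound.

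The harder inclusion $\dot{\Lambda}_{\beta,q(\cdot)}^{\ast}(\rn)\subseteq\dot{\Lambda}_{\beta}^{+}(\rn)$ splits into nonnegativity and the Lipschitz estimate. Assume $\|b\|_{\dot{\Lambda}_{\beta,q(\cdot)}^{\ast}}<\infty$. For the sign, the key pointwise inequality is $M_Q(b)\ge|b|$ a.e.\ on $Q$, whence $|b-M_Q(b)|=M_Q(b)-b\ge|b|-b=2b^{-}$, where $b^{-}=\max\{-b,0\}$. Thus $\|b^{-}\chi_Q\|_{L^{q(\cdot)}}\le\tfrac12\|(b-M_Q(b))\chi_Q\|_{L^{q(\cdot)}}$, and the generalized H\"older inequality together with the definition of the $\ast$-norm gives $\frac{1}{|Q|}\int_Q b^{-}(y)\,dy\le C|Q|^{\beta/n}\|b\|_{\dot{\Lambda}_{\beta,q(\cdot)}^{\ast}}$. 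Shrinking $Q$ to a Lebesgue point $x_0$ of $b^{-}$ forces $b^{-}(x_0)=0$, since the right-hand side tends to $0$; hence $b\ge0$ a.e.

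For the Lipschitz estimate I would pass to mean oscillations in order to avoid a circularity: estimating $\|(b-b_Q)\chi_Q\|_{L^{q(\cdot)}}$ directly by splitting $b-b_Q=(b-M_Q(b))+(M_Q(b)-b_Q)$ and using $M_Q(b)-b_Q\le M_Q(|b-b_Q|)$ only reproduces $\|(b-b_Q)\chi_Q\|_{L^{q(\cdot)}}$ up to the (generally $\ge1$) operator norm of $M$, so no absorption is possible. Instead, using $b\ge0$, the bound $M_Q(b)(x)\ge b_Q$ (take $R=Q$), and the identity $\int_Q(b-b_Q)^{+}=\int_Q(b-b_Q)^{-}=\tfrac12\int_Q|b-b_Q|$, I obtain the $L^1$ comparison
\[
\frac{1}{|Q|}\int_Q|b(y)-b_Q|\,dy\le\frac{2}{|Q|}\int_Q\big(M_Q(b)(y)-b(y)\big)\,dy .
\]
The right-hand side is controlled, via generalized H\"older and the $\ast$-norm, by $C|Q|^{\beta/n}\|b\|_{\dot{\Lambda}_{\beta,q(\cdot)}^{\ast}}$, so $\sup_Q\frac{1}{|Q|^{1+\beta/n}}\int_Q|b-b_Q|\,dy\le C\|b\|_{\dot{\Lambda}_{\beta,q(\cdot)}^{\ast}}$. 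By the classical Campanato--Meyers characterization of Lipschitz spaces this mean-oscillation condition is equivalent to $b\in\dot{\Lambda}_{\beta}(\rn)$ with $\|b\|_{\dot{\Lambda}_{\beta}}\le C\|b\|_{\dot{\Lambda}_{\beta,q(\cdot)}^{\ast}}$, which together with $b\ge0$ gives $b\in\dot{\Lambda}_{\beta}^{+}(\rn)$ and the lower bound $C_1\|b\|_{\dot{\Lambda}_{\beta}}\le\|b\|_{\dot{\Lambda}_{\beta,q(\cdot)}^{\ast}}$. The main obstacle is precisely this direction: producing a bound on the $L^{q(\cdot)}$ oscillation that does not feed back into itself, which is why the argument is routed through the nonnegativity-driven $L^1$ comparison above rather than through a direct $L^{q(\cdot)}$ splitting.
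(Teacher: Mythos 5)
Your argument is correct, and it splits cleanly into one direction that coincides with the paper's and one that is genuinely different. The inclusion $\dot{\Lambda}^{\ast}_{\beta,q(\cdot)}(\rn)\subseteq\dot{\Lambda}^{+}_{\beta}(\rn)$, with $\|b\|_{\dot{\Lambda}_{\beta}}\le C\|b\|_{\dot{\Lambda}^{\ast}_{\beta,q(\cdot)}}$, you prove exactly as the paper does (Corollary \ref{cor-thm.nc-lp}(i), i.e.\ the step $(4)\Rightarrow(1)$ in the proof of Theorem \ref{thm.nc-lp}): the identity $\int_Q(b-b_Q)^{+}=\int_Q(b-b_Q)^{-}$ together with $b_Q\le M_Q(b)$ gives the $L^1$ comparison with $\int_Q\big(M_Q(b)-b\big)$, generalized H\"older plus Lemma \ref{lem.cube} (the one place where $q(\cdot)\in\mathscr{B}(\rn)$ is truly needed) yields the mean-oscillation bound, Lemma \ref{lem.lip} converts it into the Lipschitz estimate, and $2b^{-}\le M_Q(b)-b$ plus Lebesgue differentiation gives $b\ge 0$. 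Where you genuinely diverge is the converse inclusion. Since a general $q(\cdot)\in\mathscr{B}(\rn)$ need not be the second component of an admissible pair in $\mathscr{B}^{\beta}_{p,q}(\rn)$ (that would force $q(\cdot)>n/(n-\beta)$), the paper passes to $q_0(\cdot)=rq(\cdot)$ with $r>n/(n-\beta)$, applies Corollary \ref{cor-thm.nc-lp}(ii) --- which rests on the pointwise domination of $[b,M]$ by $\mathfrak{M}_{\beta}$ and the variable-exponent fractional maximal theorem (Lemma \ref{lem.fractional}) --- and then descends from $q_0(\cdot)$ back to $q(\cdot)$ via Lemma \ref{lem.holder}(ii) and the power rule of Lemma \ref{lem.s-norm}. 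You replace all of this machinery by the pointwise estimate $0\le M_Q(b)(x)-b(x)\le c_n\|b\|_{\dot{\Lambda}_{\beta}}|Q|^{\beta/n}$ a.e.\ on $Q$ (where $b\ge 0$ is what identifies $M_Q(b)$ with the supremum of averages of $b$ itself), after which only the lattice property of $\|\cdot\|_{L^{q(\cdot)}(\rn)}$ is needed. Your route is more elementary --- no fractional maximal theorem, no exponent-raising trick --- and slightly more general: it shows the upper bound $\|b\|_{\dot{\Lambda}^{\ast}_{\beta,q(\cdot)}}\le C_2\|b\|_{\dot{\Lambda}_{\beta}}$ holds for arbitrary exponents, indeed for any function-lattice norm, so that $q(\cdot)\in\mathscr{B}(\rn)$ enters only through the reverse inequality. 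What it does not recover, and what the paper's longer detour produces as a by-product, is the operator-norm content of Theorem \ref{thm.nc-lp}, namely the boundedness of $[b,M]$ from $L^{p(\cdot)}(\rn)$ to $L^{q(\cdot)}(\rn)$ for admissible pairs.
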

%--------------------------------end Theorem 1.5--------------------------------

%--------------------------------Theorem 1.6--------------------------------
\begin{theorem}\label{thm.equiv-norm-3}
Let $0<\beta<1$. Then
${\dot{\Lambda}_{\beta}^{+}(\rn)}={\dot{\Lambda}_{\beta,q(\cdot)}^{\sharp}(\rn)}$
for all $q(\cdot)\in \mathscr{B}(\rn)$.
Furthermore, there exist positive constants $C_1$ and $C_2$
such that, for all $b\in {\dot{\Lambda}_{\beta}^{+}(\rn)}$,
$$C_1\|b\|_{\dot{\Lambda}_{\beta}} \le
\|b\|_{\dot{\Lambda}_{\beta,q(\cdot)}^{\sharp}}
\le{C_2}\|b\|_{\dot{\Lambda}_{\beta}}.
$$
\end{theorem}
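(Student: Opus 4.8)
The plan is to prove the set equality $\dot{\Lambda}_{\beta}^{+}(\rn)=\dot{\Lambda}_{\beta,q(\cdot)}^{\sharp}(\rn)$ and the two-sided norm bound simultaneously, by establishing the two inequalities separately: the bound $\|b\|_{\dot{\Lambda}_{\beta,q(\cdot)}^{\sharp}}\le C_2\|b\|_{\dot{\Lambda}_{\beta}}$ for every $b\in\dot{\Lambda}_{\beta}^{+}(\rn)$, and the reverse bound $C_1\|b\|_{\dot{\Lambda}_{\beta}}\le\|b\|_{\dot{\Lambda}_{\beta,q(\cdot)}^{\sharp}}$, the latter of which must in addition force $b\ge0$. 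Because the defining quantity is pointwise in the exponent, I expect both directions to hold for all $q(\cdot)\in\mathscr{B}(\rn)$, using only the homogeneity and lattice (monotonicity) properties of $\|\cdot\|_{L^{q(\cdot)}(\rn)}$, the generalized H\"older inequality, the estimate $\|\chi_Q\|_{L^{q(\cdot)}}\|\chi_Q\|_{L^{q'(\cdot)}}\approx|Q|$ together with the doubling of $\|\chi_Q\|_{L^{q(\cdot)}}$, and a Lebesgue differentiation statement for variable exponent spaces.

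For the upper bound I would first prove the pointwise estimate that for $0\le b\in\dot{\Lambda}_{\beta}(\rn)$, every cube $Q$ and a.e. $x\in Q$, $|b(x)-2M^{\sharp}(b\chi_{Q})(x)|\le C\|b\|_{\dot{\Lambda}_{\beta}}|Q|^{\beta/n}$. This is obtained by splitting the supremum defining $M^{\sharp}(b\chi_{Q})(x)$: for sub-cubes $R\subseteq Q$ the oscillation is $\le\|b\|_{\dot{\Lambda}_{\beta}}|R|^{\beta/n}\le\|b\|_{\dot{\Lambda}_{\beta}}|Q|^{\beta/n}$; for $R\not\subseteq Q$, writing $a=|R\cap Q|/|R|$ and $m=b_{R\cap Q}\ge0$, I bound the oscillation by a Lipschitz term over $R\cap Q$ (whose diameter is $\le\operatorname{diam}Q$) plus $2a(1-a)m\le\tfrac12 m\le\tfrac12(b(x)+C\|b\|_{\dot{\Lambda}_{\beta}}|Q|^{\beta/n})$, giving $2M^{\sharp}(b\chi_{Q})(x)\le b(x)+C\|b\|_{\dot{\Lambda}_{\beta}}|Q|^{\beta/n}$. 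The matching lower bound follows by testing with the single dilate $R\supseteq Q$, $|R|=2|Q|$, for which $(b\chi_Q)_R=\tfrac12 b_Q$. The pointwise bound together with the lattice property and homogeneity then gives $\|(b-2M^{\sharp}(b\chi_Q))\chi_Q\|_{L^{q(\cdot)}}\le C\|b\|_{\dot{\Lambda}_{\beta}}|Q|^{\beta/n}\|\chi_Q\|_{L^{q(\cdot)}}$, hence $\|b\|_{\dot{\Lambda}_{\beta,q(\cdot)}^{\sharp}}\le C_2\|b\|_{\dot{\Lambda}_{\beta}}$.

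For the reverse direction I assume $b\in L^1_{\loc}(\rn)$ with $K:=\|b\|_{\dot{\Lambda}_{\beta,q(\cdot)}^{\sharp}}<\infty$. Nonnegativity comes first: since $M^{\sharp}(b\chi_Q)\ge0$, on $Q$ we have $|b-2M^{\sharp}(b\chi_Q)|\ge b^{-}$, whence $\|b^{-}\chi_Q\|_{L^{q(\cdot)}}\le K|Q|^{\beta/n}\|\chi_Q\|_{L^{q(\cdot)}}$; dividing by $\|\chi_Q\|_{L^{q(\cdot)}}$, shrinking $Q$ to a Lebesgue point, and using the variable-exponent Lebesgue differentiation together with the doubling of $\|\chi_Q\|_{L^{q(\cdot)}}$ forces $b^{-}=0$ a.e., i.e. $b\ge0$. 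With $b\ge0$ in hand I would recover the oscillation quantity through the triangle inequality $\|(b-b_Q)\chi_Q\|_{L^{q(\cdot)}}\le\|(b-2M^{\sharp}(b\chi_Q))\chi_Q\|_{L^{q(\cdot)}}+\|(2M^{\sharp}(b\chi_Q)-b_Q)\chi_Q\|_{L^{q(\cdot)}}$, bound the first term by $K|Q|^{\beta/n}\|\chi_Q\|_{L^{q(\cdot)}}$, and control the second term to conclude $\|b\|_{\dot{\Lambda}_{\beta,q(\cdot)}}\le CK$; Theorem \ref{thm.equiv-norm-1} then yields $b\in\dot{\Lambda}_{\beta}(\rn)$ with $\|b\|_{\dot{\Lambda}_{\beta}}\le CK$, which gives $C_1\|b\|_{\dot{\Lambda}_{\beta}}\le\|b\|_{\dot{\Lambda}_{\beta,q(\cdot)}^{\sharp}}$ and, combined with $b\ge0$, the inclusion $\dot{\Lambda}_{\beta,q(\cdot)}^{\sharp}(\rn)\subseteq\dot{\Lambda}_{\beta}^{+}(\rn)$.

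The main obstacle is precisely the control of the second term $\|(2M^{\sharp}(b\chi_Q)-b_Q)\chi_Q\|_{L^{q(\cdot)}}$, since before knowing that $b$ is Lipschitz there is no pointwise bound of $|2M^{\sharp}(b\chi_Q)-b_Q|$ by $|Q|^{\beta/n}$; the difficulty is that the supremum defining $M^{\sharp}(b\chi_Q)(x)$ ranges over cubes $R$ that are larger than, or straddle, $Q$, whose oscillations involve $b$ on possibly thin and eccentric rectangles $R\cap Q$. I expect to handle this by splitting that supremum as in the upper bound, using $b\ge0$ and the truncation $\chi_Q$ to suppress the contribution of large $R$ through the smallness of $a=|R\cap Q|/|R|$, so that $|2M^{\sharp}(b\chi_Q)(x)-b_Q|$ is bounded by oscillations over cubes of size comparable to $Q$; these are in turn dominated by the ($a\ priori$ unknown, but after a truncation/self-improvement argument finite) $\dot{\Lambda}_{\beta,q(\cdot)}$-norm with a small multiplicative constant $\theta<1$, so that an inequality of the form $N\le K+\theta N+CK$ can be absorbed. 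This absorption, which mirrors the implication $(4)\Rightarrow(1)$ in the proof of Theorem \ref{thm.nc-sharp} and the arguments of \cite{bmr} and \cite{zh2}, is the technical heart of the proof.
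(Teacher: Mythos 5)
Your forward direction ($b\in\dot{\Lambda}^{+}_{\beta}(\rn)\Rightarrow\|b\|_{\dot{\Lambda}^{\sharp}_{\beta,q(\cdot)}}\le C\|b\|_{\dot{\Lambda}_{\beta}}$) is correct and genuinely different from the paper's: you prove the pointwise bound $|b(x)-2M^{\sharp}(b\chi_Q)(x)|\le C\|b\|_{\dot{\Lambda}_{\beta}}|Q|^{\beta/n}$ for $x\in Q$ (the splitting into $R\subseteq Q$ and $R\not\subseteq Q$, with $2a(1-a)m\le\tfrac12 m$ and the dilate $|R|=2|Q|$ for the lower bound, checks out, and uses $b\ge 0$ exactly where needed), after which only homogeneity and the lattice property of $\|\cdot\|_{L^{q(\cdot)}(\rn)}$ are required. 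The paper instead deduces boundedness of $[b,M^{\sharp}]$ from the pointwise domination by $\mathfrak{M}_{\beta}$ and Lemma \ref{lem.fractional}, applies it to $\chi_Q$ to get Corollary \ref{cor-thm.nc-sharp}(ii) for \emph{paired} exponents, and then removes the pairing restriction via the substitution $q_0(\cdot)=rq(\cdot)$ with $r>n/(n-\beta)$, Lemma \ref{lem.holder}(ii) and Lemma \ref{lem.s-norm}. Your route is more elementary and in fact works for every exponent function $q(\cdot)$, not only $q(\cdot)\in\mathscr{B}(\rn)$. Your nonnegativity step in the converse is also sound in substance, though the limit should be taken as in the paper: by Lemma \ref{lem.holder}(i) and Lemma \ref{lem.cube}, $|Q|^{-1}\int_Q b^{-}\,dx\le CK|Q|^{\beta/n}$, and the classical Lebesgue differentiation theorem gives $b^{-}=0$; no ``variable-exponent differentiation theorem'' is needed.

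The genuine gap is the Lipschitz-regularity half of the converse. Your triangle-inequality plan forces you to estimate $\|(2M^{\sharp}(b\chi_Q)-b_Q)\chi_Q\|_{L^{q(\cdot)}(\rn)}$, and the absorption scheme you sketch ($N\le K+\theta N+CK$ with $\theta<1$) is unsupported: nothing in the setup produces a constant $\theta<1$, and absorption is illegitimate in any case because the quantity $N$ to be absorbed is exactly the one whose finiteness is in question (the parenthetical ``truncation/self-improvement'' is not an argument). You also misremember the paper: the implication $(4)\Rightarrow(1)$ of Theorem \ref{thm.nc-sharp} contains no absorption. It avoids your obstacle entirely by comparing with $b_Q$ only in $L^1$ and only on a median set. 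From \cite{bmr} one has $|b_Q|\le 2M^{\sharp}(b\chi_Q)(x)$ for all $x\in Q$ (this is exactly what your dilate trick proves); setting $E=\{x\in Q:\ b(x)\le b_Q\}$, one has $\int_E|b-b_Q|\,dx=\int_{Q\setminus E}|b-b_Q|\,dx$, and on $E$ the chain $b(x)\le b_Q\le|b_Q|\le 2M^{\sharp}(b\chi_Q)(x)$ gives the pointwise bound $|b-b_Q|\le|b-2M^{\sharp}(b\chi_Q)|$; hence $\int_Q|b-b_Q|\,dx\le 2\int_Q|b-2M^{\sharp}(b\chi_Q)|\,dx$, and Lemma \ref{lem.holder}(i), the hypothesis, and Lemma \ref{lem.cube} yield $|Q|^{-1-\beta/n}\int_Q|b-b_Q|\,dx\le CK$ uniformly in $Q$, so that Lemma \ref{lem.lip} (rather than Theorem \ref{thm.equiv-norm-1}) gives $b\in\dot{\Lambda}_{\beta}(\rn)$ with $\|b\|_{\dot{\Lambda}_{\beta}}\le CK$. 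Replacing your triangle-inequality paragraph and the ``main obstacle'' paragraph by this argument closes the gap; combined with your forward direction and the nonnegativity step, the theorem follows.
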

%--------------------------------Theorem 1.6--------------------------------

Obviously, the ranges of $q(\cdot)$
in the fourth and fifth assertions of Theorems \ref{thm.mc-lp},
\ref{thm.nc-lp} and \ref{thm.nc-sharp} are not full since
$(p(\cdot),q(\cdot)) \in \mathscr{B}_{p,q}^{\beta}(\rn)$ implies
$n/(n-\beta)<{q(\cdot)} <\infty$.
There is a gap when $1<{q(\cdot)}\le{n/(n-\beta)}$.
We would like to remark that by Theorems \ref{thm.equiv-norm-1},
\ref{thm.equiv-norm-2} and \ref{thm.equiv-norm-3}, we can improve
the fourth and fifth assertions of Theorems \ref{thm.mc-lp},
\ref{thm.nc-lp} and \ref{thm.nc-sharp} to the full range for $q(\cdot)$.
Here, as an example, we only rewrite Theorem \ref{thm.mc-lp} as follows.

\begin{corollary}
Let $b$ be a locally integrable function and $0<\beta<1$,
then the following assertions are equivalent:

(1)\ \ $b\in {\dot{\Lambda}_{\beta}(\rn)}$.

(2)\ \ $M_b$ is bounded from $L^{p(\cdot)}(\rn)$ to $L^{q(\cdot)}(\rn)$
for all $(p(\cdot),q(\cdot)) \in \mathscr{B}_{p,q}^{\beta}(\rn)$.

(3)\ \ $M_b$ is bounded from $L^{p(\cdot)}(\rn)$ to
$L^{q(\cdot)}(\rn)$  for some $(p(\cdot),q(\cdot)) \in
\mathscr{B}_{p,q}^{\beta}(\rn)$.

(4)$'$\  There exists some $q(\cdot)\in \mathscr{B}(\rn)$, such that
$$\sup_{Q} \frac1{|Q|^{\beta/n}}
\frac{\|(b-b_{Q})\chi_{Q}\|_{L^{q(\cdot)}(\rn)}}
{\|\chi_{Q}\|_{L^{q(\cdot)}(\rn)}}
<\infty.
$$

(5)$'$\ For all  $q(\cdot)\in \mathscr{B}(\rn)$, we have
$$\sup_{Q} \frac1{|Q|^{\beta/n}} \frac{\|(b- b_{Q})\chi_{Q}\|_{L^{q(\cdot)}(\rn)}}
{\|\chi_{Q}\|_{L^{q(\cdot)}(\rn)}} <\infty.
$$
\end{corollary}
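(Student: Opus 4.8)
The plan is to leverage the equivalences already packaged in Theorem~\ref{thm.mc-lp} together with the norm equivalence of Theorem~\ref{thm.equiv-norm-1}, so that essentially no new computation is needed. First I would observe that assertions (1), (2) and (3) of the corollary are verbatim copies of assertions (1), (2) and (3) of Theorem~\ref{thm.mc-lp}; hence their mutual equivalence is already established, and it only remains to splice the two new assertions (4)$'$ and (5)$'$ into the chain. The key recognition is that the supremum appearing in (4)$'$ and (5)$'$ is precisely the norm $\|b\|_{\dot{\Lambda}_{\beta,q(\cdot)}}$ introduced in Definition~\ref{def.1.4}; thus (4)$'$ asserts $b\in\dot{\Lambda}_{\beta,q(\cdot)}(\rn)$ for some $q(\cdot)\in\mathscr{B}(\rn)$, while (5)$'$ asserts $b\in\dot{\Lambda}_{\beta,q(\cdot)}(\rn)$ for all $q(\cdot)\in\mathscr{B}(\rn)$.

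With this reformulation the proof reduces to a direct application of Theorem~\ref{thm.equiv-norm-1}, which states that $\dot{\Lambda}_{\beta}(\rn)=\dot{\Lambda}_{\beta,q(\cdot)}(\rn)$ with comparable norms for every $q(\cdot)\in\mathscr{B}(\rn)$. I would argue the cycle (1)$\Rightarrow$(5)$'\Rightarrow$(4)$'\Rightarrow$(1). For (1)$\Rightarrow$(5)$'$, if $b\in\dot{\Lambda}_{\beta}(\rn)$ then $b\in\dot{\Lambda}_{\beta,q(\cdot)}(\rn)$ for every admissible $q(\cdot)$ by Theorem~\ref{thm.equiv-norm-1}, which is exactly (5)$'$. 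The implication (5)$'\Rightarrow$(4)$'$ is trivial, since ``for all'' implies ``for some'' (the class $\mathscr{B}(\rn)$ being nonempty, as it contains all constant exponents in $(1,\infty)$). Finally, for (4)$'\Rightarrow$(1), I would apply Theorem~\ref{thm.equiv-norm-1} to the particular exponent $q(\cdot)$ furnished by (4)$'$: the membership $b\in\dot{\Lambda}_{\beta,q(\cdot)}(\rn)$ forces $b\in\dot{\Lambda}_{\beta}(\rn)$. Combining this with the equivalences already supplied by Theorem~\ref{thm.mc-lp} closes the chain.

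I do not expect any genuine obstacle inside the corollary itself; its entire content is bookkeeping, and all the analytic work has already been absorbed into Theorems~\ref{thm.mc-lp} and~\ref{thm.equiv-norm-1}. The one point worth emphasizing is why the corollary is an honest improvement over Theorem~\ref{thm.mc-lp}: the exponents $q(\cdot)$ appearing in assertions (4) and (5) there are constrained by $(p(\cdot),q(\cdot))\in\mathscr{B}_{p,q}^{\beta}(\rn)$, which forces $n/(n-\beta)<q(\cdot)<\infty$, whereas (4)$'$ and (5)$'$ range over the full class $\mathscr{B}(\rn)$, i.e. over all $q(\cdot)$ with $1<q_{-}\le q_{+}<\infty$. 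The reason the range may be enlarged is precisely that Theorem~\ref{thm.equiv-norm-1} is proved for \emph{every} $q(\cdot)\in\mathscr{B}(\rn)$ with no lower restriction, so that the characterizing supremum is finite simultaneously for all such exponents. Thus the real effort lives entirely in Theorem~\ref{thm.equiv-norm-1}, and the corollary merely harvests it.
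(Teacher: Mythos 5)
Your proposal is correct and is essentially the paper's own argument: the paper justifies this corollary precisely by combining the equivalence of (1)--(3) from Theorem~\ref{thm.mc-lp} with the identification $\dot{\Lambda}_{\beta}(\rn)=\dot{\Lambda}_{\beta,q(\cdot)}(\rn)$ (with equivalent norms) for every $q(\cdot)\in\mathscr{B}(\rn)$ supplied by Theorem~\ref{thm.equiv-norm-1}, exactly as you do. Your added remarks --- recognizing (4)$'$ and (5)$'$ as membership statements in $\dot{\Lambda}_{\beta,q(\cdot)}(\rn)$, and noting that $\mathscr{B}(\rn)$ is nonempty so that (5)$'\Rightarrow$(4)$'$ is trivial --- are the same bookkeeping the paper leaves implicit.
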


This paper is organized as follows. In the next section, we recall
some basic definitions and known results. In Section 3, we will prove
Theorems \ref{thm.mc-lp} and \ref{thm.nc-lp}.
Section 4 is devoted to proving Theorem \ref{thm.nc-sharp}.
We will prove Theorems \ref{thm.equiv-norm-1} \--- \ref{thm.equiv-norm-3}
in the last section.

\section{Preliminaries and Lemmas}

It is known that the Lipschitz space $\dot{\Lambda}_{\beta}(\rn)$
coincides with some Morrey-Companato space and can be characterized
by mean oscillation. The following lemma is due to DeVore and Sharpley
\cite{ds} and Janson, Taibleson and Weiss \cite{jtw}
(see also Paluszy\'nski \cite{p}).

%----------------------------------------Lemma 2.1--------------------------------
\begin{lemma}    \label{lem.lip}
Let $0<\beta<1$ and $1\le {q}<\infty$. Define
$$\dot{\Lambda}_{\beta,q}(\rn):=\bigg\{f \in{L_{\rm{loc}}^1(\rn)}:
\|f\|_{\dot{\Lambda}_{\beta,q}} = \sup_Q \frac1{|Q|^{\beta/n}}
\bigg(\frac1{|Q|} \int_Q|f(x)-f_Q|^q dx\bigg)^{1/q}<\infty \bigg\}.
$$
Then, for all $0<\beta<1$ and $1\le {q}<\infty$,
$\dot{\Lambda}_{\beta}(\rn)=\dot{\Lambda}_{\beta,q}(\rn)$
with equivalent norms.
\end{lemma}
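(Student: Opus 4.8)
The plan is to prove the two inclusions separately and to use Jensen's inequality to reduce the harder direction to the case $q=1$. For the easy inclusion $\dot{\Lambda}_{\beta}(\rn)\subseteq\dot{\Lambda}_{\beta,q}(\rn)$, I would fix a cube $Q$ and estimate, for $x\in Q$,
$$|f(x)-f_Q|\le\frac1{|Q|}\int_Q|f(x)-f(y)|\,dy\le\|f\|_{\dot{\Lambda}_{\beta}}\frac1{|Q|}\int_Q|x-y|^{\beta}\,dy.$$
Since $|x-y|\le\sqrt{n}\,|Q|^{1/n}$ whenever $x,y\in Q$, this yields $|f(x)-f_Q|\le(\sqrt{n})^{\beta}\|f\|_{\dot{\Lambda}_{\beta}}|Q|^{\beta/n}$; raising to the $q$-th power and averaging over $Q$ gives $\|f\|_{\dot{\Lambda}_{\beta,q}}\le(\sqrt{n})^{\beta}\|f\|_{\dot{\Lambda}_{\beta}}$ for every $q\in[1,\infty)$.

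For the reverse direction, Jensen's inequality gives $\|f\|_{\dot{\Lambda}_{\beta,1}}\le\|f\|_{\dot{\Lambda}_{\beta,q}}$ for all $q\ge1$, so it suffices to prove $\dot{\Lambda}_{\beta,1}(\rn)\subseteq\dot{\Lambda}_{\beta}(\rn)$; together with the first inclusion this closes the chain $\dot{\Lambda}_{\beta}\subseteq\dot{\Lambda}_{\beta,q}\subseteq\dot{\Lambda}_{\beta,1}\subseteq\dot{\Lambda}_{\beta}$ and forces all norms to be comparable. Writing $A=\|f\|_{\dot{\Lambda}_{\beta,1}}$, the key preliminary estimate is that for nested cubes $Q\subseteq Q'$ with $|Q'|=2^n|Q|$,
$$|f_Q-f_{Q'}|\le\frac1{|Q|}\int_Q|f-f_{Q'}|\le\frac{|Q'|}{|Q|}\cdot\frac1{|Q'|}\int_{Q'}|f-f_{Q'}|\le 2^nA\,|Q'|^{\beta/n}\le CA\,|Q|^{\beta/n}.$$

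I would then run a telescoping argument. Fixing a cube $Q$ and a Lebesgue point $x\in Q$, I would take the dyadic descendants $Q=Q_0\supseteq Q_1\supseteq\cdots$ with $x\in Q_k$ and $|Q_k|=2^{-kn}|Q|$. Applying the preliminary estimate to each consecutive pair and summing the resulting geometric series in $2^{-k\beta}$, together with the Lebesgue differentiation theorem $f_{Q_k}\to f(x)$, I would obtain $|f(x)-f_Q|\le C(1-2^{-\beta})^{-1}A\,|Q|^{\beta/n}$ for a.e.\ $x\in Q$. Finally, for two Lebesgue points $x,y$ I would choose a cube $Q$ containing both with $|Q|^{1/n}\approx|x-y|$ and split $|f(x)-f(y)|\le|f(x)-f_Q|+|f_Q-f(y)|\le CA\,|x-y|^{\beta}$, so that $f$ coincides a.e.\ with a function in $\dot{\Lambda}_{\beta}(\rn)$ with $\|f\|_{\dot{\Lambda}_{\beta}}\le CA$.

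The main obstacle is this reverse inclusion: one must recover a pointwise bound on $|f(x)-f_Q|$ from the purely averaged hypothesis, which is exactly what the chaining over dyadic scales accomplishes, with the Lebesgue differentiation theorem converting cube averages into pointwise values on the full-measure set of Lebesgue points. The technical heart is bookkeeping of the geometric constants, namely the factor $2^n$ incurred at each halving and the summable factor $2^{-k\beta}$, whose series converges precisely because $\beta>0$; the comparison $|Q|^{1/n}\approx|x-y|$ in the last step is routine.
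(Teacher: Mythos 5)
Your proposal is correct. Note, however, that the paper does not actually prove this lemma: it is quoted as a known result, with the proof deferred to DeVore--Sharpley \cite{ds}, Janson--Taibleson--Weiss \cite{jtw} and Paluszy\'nski \cite{p}, where it appears as part of the general theory identifying Morrey--Campanato spaces with Lipschitz/H\"older classes. What you have written is a self-contained proof of exactly the special case needed: the easy inclusion via the pointwise bound $|f(x)-f_Q|\le n^{\beta/2}\|f\|_{\dot{\Lambda}_{\beta}}|Q|^{\beta/n}$, the reduction to $q=1$ by Jensen, and then the standard Campanato-type chaining argument --- the $2^{n+1}$ loss per dyadic halving, the geometric series $\sum_k 2^{-k\beta}$ convergent precisely because $\beta>0$, and the Lebesgue differentiation theorem to pass from averages $f_{Q_k}$ to the pointwise value $f(x)$. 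All steps check out, including the correct formulation of the conclusion (that $f$ agrees a.e.\ with a genuine $\dot{\Lambda}_{\beta}$ function, which is the right reading of the set equality since $\dot{\Lambda}_{\beta,q}$ only sees $f$ up to null sets). The only thing your argument does not deliver, and the cited references do, is the full Morrey--Campanato scale (higher-order oscillation, the endpoint identifications with $BMO$ at $\beta=0$, etc.); for the lemma as stated in this paper, your proof is complete and arguably preferable to an external citation.
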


Denoted by $p'(\cdot)$ the conjugate index of $p(\cdot)$.
It is easy to check that if $p(\cdot)\in \mathscr{P}(\rn)$
then $p'(\cdot)\in \mathscr{P}(\rn)$.
The following lemma is known as the generalized H\"older's
inequality in variable exponent Lebesgue spaces. See \cite{cf} and
\cite{dhhr} for details.

%---------------------begin Lemma2.2-----------------------------------
\begin{lemma} \label{lem.holder}
{(i)} Let $p(\cdot)\in \mathscr{P}(\rn)$. Then there exists a positive
constant $C$ such that for all $f\in L^{p(\cdot)}(\rn)$ and
$g\in L^{p'(\cdot)}(\rn)$,
$$\int_{\rn} |f(x)g(x)|dx \le {C} \|f\|_{L^{p(\cdot)}(\rn)}
\|g\|_{L^{p'(\cdot)}(\rn)}.
$$

{(ii)} Let $p(\cdot),~ p_{1}(\cdot),~ p_{2}(\cdot)\in \mathscr{P}(\rn)$
and $1/p(\cdot)=1/p_{1}(\cdot)+1/p_{2}(\cdot)$. Then there exists a
positive constant $C$ such that for all
$f\in {L^{p_{1}(\cdot)}}(\rn)$ and $g\in {L^{p_{2}(\cdot)}(\rn)}$,
$$ \|fg\|_{L^{p(\cdot)}(\rn)} \le {C}
 \|f\|_{L^{p_{1}(\cdot)}(\rn)}  \|g\|_{L^{p_{2}(\cdot)}(\rn)}.
$$
\end{lemma}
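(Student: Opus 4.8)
The plan is to derive both parts from the pointwise Young inequality together with the relationship between the modular $\rho_{p(\cdot)}(h)=\int_{\rn}|h(x)|^{p(x)}\,dx$ and the norm $\|\cdot\|_{L^{p(\cdot)}(\rn)}$. The two facts I would first record, valid because every exponent here lies in $\mathscr{P}(\rn)$ and hence satisfies $1<p_{-}\le p_{+}<\infty$, are: (a) the \emph{unit ball property}, namely $\|h\|_{L^{p(\cdot)}}\le 1$ if and only if $\rho_{p(\cdot)}(h)\le 1$, so that $0<\|h\|_{L^{p(\cdot)}}<\infty$ forces $\rho_{p(\cdot)}(h/\|h\|_{L^{p(\cdot)}})\le 1$; and (b) the elementary conversion that if $\rho_{p(\cdot)}(h)\le A$ with $A\ge 1$, then taking $\lambda=A^{1/p_{-}}\ge 1$ gives $\rho_{p(\cdot)}(h/\lambda)\le\lambda^{-p_{-}}\rho_{p(\cdot)}(h)\le 1$, whence $\|h\|_{L^{p(\cdot)}}\le A^{1/p_{-}}$. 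Both are standard consequences of Definition \ref{def.varLp} and the finiteness of $p_{+}$.

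For part (i), I would first dispose of the trivial cases ($f$ or $g$ zero, or either norm infinite) and then normalize by homogeneity so that $\|f\|_{L^{p(\cdot)}}=\|g\|_{L^{p'(\cdot)}}=1$. By (a) this yields $\rho_{p(\cdot)}(f)\le 1$ and $\rho_{p'(\cdot)}(g)\le 1$. Since $1/p(x)+1/p'(x)=1$ pointwise, Young's inequality gives $|f(x)g(x)|\le |f(x)|^{p(x)}/p(x)+|g(x)|^{p'(x)}/p'(x)$, and because $p(x),p'(x)\ge 1$ the two reciprocals are at most $1$; integrating over $\rn$ then produces $\int_{\rn}|fg|\,dx\le\rho_{p(\cdot)}(f)+\rho_{p'(\cdot)}(g)\le 2$. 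Undoing the normalization gives the claim with $C=2$.

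For part (ii), the key observation is that $1/p=1/p_{1}+1/p_{2}$ forces $p(x)\le p_{1}(x)$ and $p(x)\le p_{2}(x)$, so the exponents $\alpha(x)=p_{1}(x)/p(x)$ and $\beta(x)=p_{2}(x)/p(x)$ satisfy $\alpha(x),\beta(x)\ge 1$ and $1/\alpha(x)+1/\beta(x)=p(x)\big(1/p_{1}(x)+1/p_{2}(x)\big)=1$, i.e.\ they are conjugate at each point. After normalizing $\|f\|_{L^{p_{1}(\cdot)}}=\|g\|_{L^{p_{2}(\cdot)}}=1$, I would apply Young's inequality to $a=|f(x)|^{p(x)}$ and $b=|g(x)|^{p(x)}$ with these conjugate exponents, obtaining $|f(x)g(x)|^{p(x)}=ab\le a^{\alpha(x)}/\alpha(x)+b^{\beta(x)}/\beta(x)\le |f(x)|^{p_{1}(x)}+|g(x)|^{p_{2}(x)}$. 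Integrating gives $\rho_{p(\cdot)}(fg)\le\rho_{p_{1}(\cdot)}(f)+\rho_{p_{2}(\cdot)}(g)\le 2$, and fact (b) then converts this modular bound into $\|fg\|_{L^{p(\cdot)}}\le 2^{1/p_{-}}$; rescaling yields the stated inequality with $C=2^{1/p_{-}}$. (Alternatively, part (ii) can be read off from part (i) via the norm-conjugate/duality formula, but the direct route above is self-contained.)

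The computations are all routine; the only point requiring care is the passage between the modular and the norm. In particular, a modular bound such as $\rho_{p(\cdot)}(fg)\le 2$ does \emph{not} immediately give a norm bound of $2$, and the exponent $1/p_{-}$ in fact (b)---coming from the essential infimum of $p(\cdot)$---is exactly what makes the variable-exponent statement differ from the constant-exponent Hölder inequality. Ensuring $\alpha(\cdot),\beta(\cdot)\ge 1$, so that Young's inequality applies pointwise, is the other place where the hypothesis $1/p=1/p_{1}+1/p_{2}$ with $p_{i}\in\mathscr{P}(\rn)$ is genuinely used.
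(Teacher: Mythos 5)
Your proof is correct. Note, however, that the paper does not prove this lemma at all: it is quoted as a known result (the generalized H\"older inequality on $L^{p(\cdot)}(\rn)$), with the reader referred to the books \cite{cf} and \cite{dhhr} for details. What you have written is essentially the standard proof contained in those references: normalize by homogeneity; use the unit-ball property of the Luxemburg norm (valid here since $p_{+}<\infty$, by monotone convergence as $\lambda\downarrow\|h\|_{L^{p(\cdot)}(\rn)}$) to pass from norm bounds to modular bounds; apply Young's inequality pointwise --- in part (i) with the conjugate pair $(p(x),p'(x))$, in part (ii) with the pointwise conjugate exponents $\alpha(x)=p_{1}(x)/p(x)$ and $\beta(x)=p_{2}(x)/p(x)$ applied to $a=|f(x)|^{p(x)}$, $b=|g(x)|^{p(x)}$ --- and then convert the resulting modular bound back into a norm bound via the implication $\rho_{p(\cdot)}(h)\le A\Rightarrow\|h\|_{L^{p(\cdot)}(\rn)}\le A^{1/p_{-}}$ for $A\ge 1$. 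All the steps are sound, and you correctly isolate the two genuinely delicate points: the modular-to-norm passage, which is where the exponent $1/p_{-}$ and hence the constant $2^{1/p_{-}}$ in part (ii) come from, and the pointwise conjugacy of $\alpha(\cdot)$ and $\beta(\cdot)$. One small cosmetic remark: since $p_{1},p_{2}\in\mathscr{P}(\rn)$ satisfy $p_{1,+},p_{2,+}<\infty$, one in fact has $\alpha(x),\beta(x)>1$ strictly almost everywhere, so Young's inequality applies with no degenerate case $\beta(x)=\infty$ to worry about. Your self-contained argument is a perfectly good substitute for the paper's citation.
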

%---------------------end Lemma2.2-----------------------------------

\begin{lemma}[\cite{cu-w}]  \label{lem.s-norm}
Given $p(\cdot) \in \mathscr{P}$, then for all $s>0$, we have
$$\big\||f|^s\big\|_{L^{p(\cdot)}(\rn)}=\|f\|^s_{L^{sp(\cdot)}(\rn)}.
$$
\end{lemma}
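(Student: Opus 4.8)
The plan is to argue directly from the Luxemburg norm definition and reduce the identity to a change of the scaling parameter. Recall that, by definition,
$$\big\||f|^s\big\|_{L^{p(\cdot)}(\rn)}=\inf\bigg\{\lambda>0: \int_{\rn}\bigg(\frac{|f(x)|^s}{\lambda}\bigg)^{p(x)}dx\le 1\bigg\}.$$
First I would perform the substitution $\lambda=\mu^s$, which is legitimate precisely because $s>0$ makes $\mu\mapsto\mu^s$ a strictly increasing bijection of $(0,\infty)$ onto itself. The key algebraic observation is then that, for every $x\in\rn$,
$$\bigg(\frac{|f(x)|^s}{\mu^s}\bigg)^{p(x)}=\bigg(\frac{|f(x)|}{\mu}\bigg)^{sp(x)},$$
so that, after this substitution, the constraint $\int_{\rn}(|f(x)|^s/\lambda)^{p(x)}\,dx\le 1$ becomes exactly the constraint $\int_{\rn}(|f(x)|/\mu)^{sp(x)}\,dx\le 1$ that defines the modular associated with the exponent $sp(\cdot)$.

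Next I would transfer the infimum across this substitution. Setting $A=\{\mu>0:\int_{\rn}(|f(x)|/\mu)^{sp(x)}\,dx\le 1\}$, the identity above shows that the admissible set for $\lambda$ is precisely $\{\mu^s:\mu\in A\}$, whence
$$\big\||f|^s\big\|_{L^{p(\cdot)}(\rn)}=\inf\{\mu^s:\mu\in A\}=\Big(\inf_{\mu\in A}\mu\Big)^s=\|f\|^s_{L^{sp(\cdot)}(\rn)},$$
where the middle equality again uses that $t\mapsto t^s$ is continuous and strictly increasing on $(0,\infty)$, so that it commutes with taking the infimum of a subset of $(0,\infty)$. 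This gives the claimed equality.

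The argument is essentially bookkeeping, and I do not anticipate a genuine obstacle. The only step deserving a word of care is the interchange $\inf\{\mu^s:\mu\in A\}=(\inf A)^s$, which rests solely on the monotonicity and continuity of the power map. In particular, the hypothesis $p(\cdot)\in\mathscr{P}$ enters only to guarantee that the norms involved are well defined; the scaling identity itself holds verbatim for the Luxemburg norm of an arbitrary variable exponent.
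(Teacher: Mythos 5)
Your proof is correct. Note that the paper itself gives no argument for this lemma --- it is quoted from the reference of Cruz-Uribe and Wang --- so there is nothing to compare against; your direct computation (rewriting the modular via $(|f(x)|^s/\mu^s)^{p(x)}=(|f(x)|/\mu)^{sp(x)}$, then pulling the infimum through the increasing homeomorphism $t\mapsto t^s$ of $(0,\infty)$) is exactly the standard verification, and it is complete. The only case you leave implicit is $f\notin L^{sp(\cdot)}(\rn)$, where both admissible sets are empty and the identity holds with the convention $\inf\emptyset=+\infty$; your observation that $p(\cdot)\in\mathscr{P}$ is not really needed for the scaling identity itself is also accurate.
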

%--------------------------------end Lemma 2.3--------------------------------

\begin{lemma}[\cite{i1}] \label{lem.cube}
Let $ q(\cdot)\in \mathscr{B}(\rn)$, then there exists a constant
$C>0$ such that
$$\frac{1}{|Q|} \|\chi_{Q}\|_{L^{q(\cdot)}(\rn)}
\|\chi_{Q}\|_{L^{q'(\cdot)}(\rn)} \le C
$$
for all cubes $Q$ in $\rn$.
\end{lemma}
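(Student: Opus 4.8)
The plan is to prove only the stated upper bound, since the matching lower bound $|Q|\le C\|\chi_Q\|_{L^{q(\cdot)}(\rn)}\|\chi_Q\|_{L^{q'(\cdot)}(\rn)}$ is immediate from the generalized H\"older inequality (Lemma~\ref{lem.holder}(i)) applied to $\int_{\rn}\chi_Q\cdot\chi_Q\,dx=|Q|$, and the Lemma as worded only claims the upper estimate. The heart of the argument is the duality (norm conjugate) formula for variable exponent spaces, combined with the hypothesis $q(\cdot)\in\mathscr{B}(\rn)$, that is, the boundedness of the Hardy--Littlewood maximal operator $M$ on $L^{q(\cdot)}(\rn)$.

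First I would reduce the estimation of $\|\chi_Q\|_{L^{q'(\cdot)}(\rn)}$ to a testing inequality. Since $q(\cdot)\in\mathscr{P}(\rn)$, the associate space of $L^{q(\cdot)}(\rn)$ is $L^{q'(\cdot)}(\rn)$ with equivalent norms, so the norm conjugate formula supplies a constant $C$ with
$$\|\chi_Q\|_{L^{q'(\cdot)}(\rn)}\le C\sup\Big\{\int_Q|h(x)|\,dx:\ \|h\|_{L^{q(\cdot)}(\rn)}\le1\Big\}.$$
Note that this is the reverse direction of H\"older; Lemma~\ref{lem.holder}(i) only gives the opposite inequality, so this duality bound is an additional (standard) ingredient.

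Next, fix $h$ with $\|h\|_{L^{q(\cdot)}(\rn)}\le1$. The decisive observation is that, by the very definition of $M$, choosing the competing cube to be $Q$ itself yields
$$\frac1{|Q|}\int_Q|h(y)|\,dy\le M(h)(x)\qquad\text{for every }x\in Q.$$
Multiplying by $\chi_Q$, taking the $L^{q(\cdot)}(\rn)$ norm (the scalar $\frac1{|Q|}\int_Q|h|$ factors out), and invoking the boundedness of $M$ on $L^{q(\cdot)}(\rn)$, which is exactly the content of $q(\cdot)\in\mathscr{B}(\rn)$, I obtain
$$\frac1{|Q|}\Big(\int_Q|h(y)|\,dy\Big)\|\chi_Q\|_{L^{q(\cdot)}(\rn)}\le\big\|M(h)\big\|_{L^{q(\cdot)}(\rn)}\le C\|h\|_{L^{q(\cdot)}(\rn)}\le C.$$
Hence $\int_Q|h(y)|\,dy\le C|Q|/\|\chi_Q\|_{L^{q(\cdot)}(\rn)}$ for every admissible $h$.

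Finally, taking the supremum over all such $h$ and combining with the duality bound from the second step gives $\|\chi_Q\|_{L^{q'(\cdot)}(\rn)}\le C|Q|/\|\chi_Q\|_{L^{q(\cdot)}(\rn)}$, which rearranges precisely to the claimed inequality $\frac1{|Q|}\|\chi_Q\|_{L^{q(\cdot)}(\rn)}\|\chi_Q\|_{L^{q'(\cdot)}(\rn)}\le C$, with $C$ independent of $Q$. The only delicate point, and the main obstacle, is the justification of the norm conjugate formula in the variable exponent setting; this is where the Banach function space structure of $L^{p(\cdot)}(\rn)$ is used, and it holds for every $p(\cdot)\in\mathscr{P}(\rn)$. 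Everything else is elementary once the mapping property $M:L^{q(\cdot)}(\rn)\to L^{q(\cdot)}(\rn)$ has been invoked.
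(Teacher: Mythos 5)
Your proof is correct and coincides with the standard argument behind this lemma: the paper itself offers no proof, simply citing Izuki \cite{i1}, whose proof is precisely your combination of the norm conjugate (duality) formula for $L^{q(\cdot)}(\rn)$ with the testing estimate $\frac{1}{|Q|}\int_Q |h(y)|\,dy \le M(h)(x)$ for $x\in Q$ and the hypothesis that $M$ is bounded on $L^{q(\cdot)}(\rn)$. Your side remarks are also accurate: Lemma~\ref{lem.holder}(i) only yields the opposite (lower) bound, so the associate-space duality formula, valid for all $q(\cdot)\in\mathscr{P}(\rn)$, is indeed the one extra ingredient beyond the definition of $\mathscr{B}(\rn)$.
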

%--------------------------------end Lemma2.4--------------------------------

\begin{lemma}  \label{lem.cube2}
Let $0<\alpha<n$. If $(p(\cdot),q(\cdot)) \in\mathscr{B}_{p,q}^{\alpha}(\rn)$,
then there exists a constant $C>0$ such that for all cubes $Q$,
$$\|\chi_Q\|_{L^{p(\cdot)}(\rn)}
\le {C}|Q|^{\alpha/n}\|\chi_Q\|_{L^{q(\cdot)}(\rn)}.
$$
\end{lemma}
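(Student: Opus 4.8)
The plan is to obtain the desired inequality as an immediate consequence of the generalized H\"older inequality (Lemma \ref{lem.holder}(ii)), exploiting the fact that the ``gap'' between $p(\cdot)$ and $q(\cdot)$ is a \emph{constant} exponent. First I would rewrite the defining relation $1/q(\cdot)=1/p(\cdot)-\alpha/n$ of $\mathscr{B}_{p,q}^{\alpha}(\rn)$ in the additive form
$$\frac{1}{p(\cdot)}=\frac{1}{q(\cdot)}+\frac{\alpha}{n}=\frac{1}{q(\cdot)}+\frac{1}{n/\alpha},$$
and observe that $n/\alpha$ is a genuine constant exponent lying in $(1,\infty)$, since $0<\alpha<n$ forces $n/\alpha>1$.

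Before applying Lemma \ref{lem.holder}(ii), I would check that all three exponents involved belong to $\mathscr{P}(\rn)$. Here $p(\cdot)\in\mathscr{P}(\rn)$ holds by the very definition of $\mathscr{B}_{p,q}^{\alpha}(\rn)$; the membership $q(\cdot)\in\mathscr{P}(\rn)$ follows from Remark \ref{remark2}, which guarantees that $q(\cdot)(n-\alpha)/n\in\mathscr{B}(\rn)$ forces $q(\cdot)\in\mathscr{B}(\rn)\subset\mathscr{P}(\rn)$; and the constant exponent $n/\alpha$ is admissible precisely because $\alpha<n$ gives $n/\alpha>1$.

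Then, applying Lemma \ref{lem.holder}(ii) with the choices $f=g=\chi_Q$, $p_1(\cdot)=q(\cdot)$ and $p_2(\cdot)=n/\alpha$, and using the identity $\chi_Q\chi_Q=\chi_Q$, I would arrive at
$$\|\chi_Q\|_{L^{p(\cdot)}(\rn)}\le C\,\|\chi_Q\|_{L^{q(\cdot)}(\rn)}\,\|\chi_Q\|_{L^{n/\alpha}(\rn)}.$$
Since $n/\alpha$ is a constant exponent, the last factor evaluates explicitly, $\|\chi_Q\|_{L^{n/\alpha}(\rn)}=|Q|^{\alpha/n}$ (directly from the definition of the norm, or via Lemma \ref{lem.s-norm}), and substituting this yields the claimed bound with the same constant $C$.

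This argument is in essence a one-line application of the generalized H\"older inequality, so I do not expect any substantial obstacle. The only points that demand care are the bookkeeping of the exponents\---in particular recognizing that the fractional-order gap $\alpha/n$ corresponds to the \emph{constant} exponent $n/\alpha$ and verifying that it exceeds $1$\---together with confirming, through Remark \ref{remark2}, that $q(\cdot)$ inherits membership in $\mathscr{P}(\rn)$ from the hypotheses so that Lemma \ref{lem.holder}(ii) is genuinely applicable.
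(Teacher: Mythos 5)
Your proposal is correct and takes essentially the same route as the paper: the paper's own proof simply notes that $1/p(\cdot)=1/q(\cdot)+\alpha/n$ with $p(\cdot),q(\cdot)\in\mathscr{P}(\rn)$ and invokes Lemma \ref{lem.holder}(ii) directly. You merely make explicit the details the paper leaves implicit\---the factorization $\chi_Q=\chi_Q\cdot\chi_Q$, the admissibility of the constant exponent $n/\alpha>1$, and the evaluation $\|\chi_Q\|_{L^{n/\alpha}(\rn)}=|Q|^{\alpha/n}$.
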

%--------------------------------end Lemma2.5--------------------------------

\begin{proof}
Since $(p(\cdot),q(\cdot)) \in\mathscr{B}_{p,q}^{\alpha}(\rn)$
then $p(\cdot),q(\cdot) \in\mathscr{P}(\rn)$ and
$1/p(\cdot)=1/q(\cdot) + \beta/n$. The desired inequality
follows from Lemma \ref{lem.holder} (ii) directly.
\end{proof}

Let $0<\alpha<n$ and $f$ be a locally integrable function, the
fractional maximal function of $f$ is defined by
$$ \mathfrak{M}_{\alpha}(f)(x) =\sup_{Q\ni{x}}\frac1{|Q|^{1-\alpha/n}} \int_Q|f(y)|dy
$$
where the supremum is taken over all cubes $Q\subset\rn$ containing $x$.

The following result follows from Corollary 2.12 and Remark 2.13
of \cite{cfmp}, which improves the corresponding result in Capone,
Cruz-Uribe and Fiorenza \cite{ccf}.

%--------------------------------end Lemma2.6--------------------------------

\begin{lemma}[\cite{cfmp}] \label{lem.fractional}
Let $p(\cdot), q(\cdot)\in \mathscr{P}(\rn)$, $0<\alpha<n/p_{+}$
and $1/q(\cdot)=1/p(\cdot)-\alpha/n$. If $q(\cdot)(n-\alpha)/n \in
\mathscr{B}(\rn)$, then $\mathfrak{M}_{\alpha}$ is bounded from
$L^{p(\cdot)}(\rn)$ to $L^{q(\cdot)}(\rn)$.
\end{lemma}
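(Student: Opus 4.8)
The plan is to deduce the $L^{p(\cdot)}\to L^{q(\cdot)}$ bound from the classical \emph{constant-exponent} weighted norm inequality for $\mathfrak{M}_{\alpha}$ together with an off-diagonal extrapolation argument, which is the mechanism behind the cited Corollary 2.12 of \cite{cfmp}. First I would invoke Remark \ref{remark2} to replace the hypothesis $q(\cdot)(n-\alpha)/n\in\mathscr{B}(\rn)$ by the existence of a constant $q_0$ with $n/(n-\alpha)<q_0<\infty$ and $q(\cdot)/q_0\in\mathscr{B}(\rn)$. Fixing this $q_0$, I define the companion constant exponent $p_0$ by $1/p_0=1/q_0+\alpha/n$. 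A direct check shows that $q_0>n/(n-\alpha)$ forces $1/p_0<1$, while $1/p_0>\alpha/n$ forces $p_0<n/\alpha$; hence $1<p_0<q_0<\infty$ and $(p_0,q_0)$ is a genuine off-diagonal pair with $1/q_0=1/p_0-\alpha/n$. The variable relation $1/q(\cdot)=1/p(\cdot)-\alpha/n$ is then the variable-exponent ``shadow'' of this fixed constant pair.

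The base ingredient is the Muckenhoupt--Wheeden theorem: for the off-diagonal pair $(p_0,q_0)$ above, $\mathfrak{M}_{\alpha}$ maps $L^{p_0}(w^{p_0})$ into $L^{q_0}(w^{q_0})$ for every weight $w$ in the class $A_{p_0,q_0}$, that is, every $w$ with $\sup_Q\big(\frac1{|Q|}\int_Q w^{q_0}\big)^{1/q_0}\big(\frac1{|Q|}\int_Q w^{-p_0'}\big)^{1/p_0'}<\infty$, the constant depending only on this characteristic. The second step runs the extrapolation in the variable setting. Writing $s(\cdot)=q(\cdot)/q_0\in\mathscr{B}(\rn)$, I would dualize: since $\|\mathfrak{M}_{\alpha}f\|_{L^{q(\cdot)}}^{q_0}=\big\||\mathfrak{M}_{\alpha}f|^{q_0}\big\|_{L^{s(\cdot)}}$ by Lemma \ref{lem.s-norm}, it suffices, via the norm--conjugate duality in $L^{s(\cdot)}$, to estimate $\int_{\rn}|\mathfrak{M}_{\alpha}f|^{q_0}h$ for nonnegative $h$ with $\|h\|_{L^{s'(\cdot)}}\le 1$. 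Because $s(\cdot)\in\mathscr{B}(\rn)$ implies $s'(\cdot)\in\mathscr{B}(\rn)$ (Diening's self-duality of $\mathscr{B}(\rn)$), $M$ is bounded on $L^{s'(\cdot)}$, so I may form the Rubio de Francia weight $Rh=\sum_{k\ge0}(2\|M\|_{L^{s'(\cdot)}})^{-k}M^{k}h$, which satisfies $h\le Rh$, $\|Rh\|_{L^{s'(\cdot)}}\le 2\|h\|_{L^{s'(\cdot)}}$, and $Rh\in A_1$. Converting $Rh$ into a weight adapted to the off-diagonal class $A_{p_0,q_0}$, applying the weighted bound, and then using the generalized H\"older inequality of Lemma \ref{lem.holder} returns the estimate by $\|f\|_{L^{p(\cdot)}}$.

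The step I expect to be the crux is the extrapolation itself: manufacturing from the single $A_1$ weight $Rh$ a weight lying in $A_{p_0,q_0}$ for which the Muckenhoupt--Wheeden constant is controlled, while simultaneously keeping its $L^{s'(\cdot)}$-norm bounded. Off-diagonal extrapolation typically requires pairing the Rubio de Francia algorithm on the $q_0$-side with a second one on the $p_0$-side, and the bookkeeping that ties the $A_{p_0,q_0}$ condition to membership in $\mathscr{B}(\rn)$ is the delicate part. An alternative, more self-contained route avoids weights via a variable-exponent Hedberg inequality: for a cube $Q\ni x$ one has the two bounds $\frac1{|Q|}\int_Q|f|\le Mf(x)$ and, by Lemma \ref{lem.holder}, $\frac1{|Q|}\int_Q|f|\le C|Q|^{-1}\|f\|_{L^{p(\cdot)}}\|\chi_Q\|_{L^{p'(\cdot)}}$; multiplying by $|Q|^{\alpha/n}$ and optimizing over the side length of $Q$ produces a pointwise estimate of the shape $\mathfrak{M}_{\alpha}f(x)\lesssim (Mf(x))^{1-\theta(x)}\|f\|_{L^{p(\cdot)}}^{\theta(x)}$. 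The obstacle there is that the balancing exponent $\theta(x)$ is variable, so this product does not pass cleanly through the modular defining $\|\cdot\|_{L^{q(\cdot)}}$, which is precisely why the extrapolation route is preferable.
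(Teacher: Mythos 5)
The first thing to note is that the paper contains no proof of this lemma: it is imported verbatim from Corollary 2.12 and Remark 2.13 of \cite{cfmp}, so the only meaningful comparison is with the method of that cited source. Your proposal reconstructs essentially that method --- off-diagonal Rubio de Francia extrapolation from the Muckenhoupt--Wheeden weighted inequality for $\mathfrak{M}_{\alpha}$ --- so the strategy is the right one, and your reduction to the constant pair $(p_0,q_0)$ via Remark \ref{remark2} is correct: $q_0>n/(n-\alpha)$ gives $1<p_0<q_0<\infty$ and $1/q_0=1/p_0-\alpha/n$.

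The step you flag as the crux --- turning the single $A_1$ weight $Rh$ into an $A_{p_0,q_0}$ weight --- is in fact easier than you predict: no second algorithm on the $p_0$-side is needed for this operator. Take $w=(Rh)^{1/q_0}$. Since $w\in A_{p_0,q_0}$ if and only if $w^{q_0}\in A_{1+q_0/p_0'}$, and $Rh\in A_1\subset A_{1+q_0/p_0'}$ with $A_1$ constant at most $2\|M\|_{L^{s'(\cdot)}\to L^{s'(\cdot)}}$ independent of $h$, the weight $w$ lies in $A_{p_0,q_0}$ with uniformly controlled characteristic. Then $\int_{\rn}|\mathfrak{M}_{\alpha}f|^{q_0}h\le\int_{\rn}|\mathfrak{M}_{\alpha}f|^{q_0}Rh=\|\mathfrak{M}_{\alpha}f\|^{q_0}_{L^{q_0}(w^{q_0})}\le C\|f\|^{q_0}_{L^{p_0}(w^{p_0})}$ by Muckenhoupt--Wheeden, and the generalized H\"older inequality (Lemma \ref{lem.holder}) with exponents $p(\cdot)/p_0$ and $(p(\cdot)/p_0)'$, combined with Lemma \ref{lem.s-norm}, gives $\int_{\rn}|f|^{p_0}(Rh)^{p_0/q_0}\le C\|f\|^{p_0}_{L^{p(\cdot)}(\rn)}\|Rh\|^{p_0/q_0}_{L^{(p_0/q_0)(p(\cdot)/p_0)'}(\rn)}$. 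The bookkeeping closes because of the exponent identity $(p_0/q_0)(p(\cdot)/p_0)'=s'(\cdot)$: indeed $(q_0/p_0)\bigl(1-p_0/p(\cdot)\bigr)=q_0\bigl(1/q_0+\alpha/n\bigr)-q_0\bigl(1/q(\cdot)+\alpha/n\bigr)=1-q_0/q(\cdot)=1/s'(\cdot)$, so the last factor is at most $\bigl(2\|h\|_{L^{s'(\cdot)}}\bigr)^{p_0/q_0}\le 2$. (Note also that $s_->1$, which follows from $q(\cdot)/q_0\in\mathscr{B}(\rn)\subset\mathscr{P}(\rn)$, guarantees $p(\cdot)/p_0\in\mathscr{P}(\rn)$, so H\"older is legitimately applicable.) With this detail filled in, your argument is a complete and correct proof of the lemma, matching the mechanism of the cited source; your judgement that the Hedberg-type pointwise route is obstructed by the variable balancing exponent is also accurate.
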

%--------------------------------end Lemma2.6--------------------------------

\section{Proof of Theorems \ref{thm.mc-lp} and \ref{thm.nc-lp}}

\begin{proof} {\bf of Theorem \ref{thm.mc-lp}}~ Since the implications
$(2)\Rightarrow(3)$ and $(5)\Rightarrow(4)$ follow readily,
and $(2)\Rightarrow(5)$ is similar to $(3)\Rightarrow(4)$,
we only need to prove $(1)\Rightarrow(2)$, $(3)\Rightarrow(4)$
and $(4)\Rightarrow(1)$.

(1) $\Longrightarrow$ (2).
If $b\in {\dot{\Lambda}_{\beta}(\rn)}$, then
\begin{equation}        \label{equ.thm.mc-1}
\begin{split}
M_b(f)(x) & =\sup_{Q\ni {x}} \frac1{|Q|} \int_Q
|b(x)-b(y)||f(y)|dy\\
  &\le {C}\|b\|_{\dot{\Lambda}_{\beta}} \sup_{Q\ni {x}}
    \frac1{|Q|^{1-\beta/n}} \int_Q |f(y)|dy\\
    &= {C}\|b\|_{\dot{\Lambda}_{\beta}} \mathfrak{M}_{\beta}(f)(x).
\end{split}
\end{equation}
Obviously, assertion (2) follows from Lemma \ref{lem.fractional}
and (\ref{equ.thm.mc-1}).

(3) $\Longrightarrow$ (4).
For any fixed cube $Q$, we have for all $x\in{Q}$,
\begin{equation*}
\begin{split}
|b(x)-b_Q|
 &\le \frac1{|Q|}\int_Q|b(x)-b(y)|dy\\
 &=\frac1{|Q|}\int_Q|b(x)-b(y)|\chi_Q(y)dy\\
 &\le {M_b(\chi_Q)(x)}.
\end{split}
\end{equation*}
Then, for all $x\in \rn$,
$$|(b(x)-b_Q)\chi_Q(x)| \le {M_b(\chi_Q)(x)}.
$$

By assertion (3), there exists
$(p(\cdot),q(\cdot)) \in\mathscr{B}_{p,q}^{\beta}(\rn)$
such that $M_b$ is bounded from $L^{p(\cdot)}(\rn)$ to
$L^{q(\cdot)}(\rn)$. For this $(p(\cdot),q(\cdot))$,
it follows from assertion (3) and Lemma \ref{lem.cube2}
that there exists a constant $C>0$, independent of $Q$,
such that
\begin{equation}        \label{equ.thm.mc-2}
\begin{split}
\|(b-b_Q)\chi_Q\|_{L^{q(\cdot)}(\rn)}
 &\le \|M_b(\chi_Q)\|_{L^{q(\cdot)}(\rn)}\\
  &\le {C}\|M_b\|_{L^{p(\cdot)}\to {L^{q(\cdot)}}}
   \|\chi_Q\|_{L^{p(\cdot)}(\rn)}\\
  &\le {C}\|M_b\|_{L^{p(\cdot)}\to {L^{q(\cdot)}}}
    |Q|^{\beta/n}\|\chi_Q\|_{L^{q(\cdot)}(\rn)},
\end{split}
\end{equation}
which implies assertion (4) since $Q$ is arbitrary
and $C$ is independent of $Q$.

(4) $\Longrightarrow$ (1). For any cube $Q$,
by Lemma \ref{lem.holder} (i), assertion (4) and Lemma \ref{lem.cube},
we have
\begin{equation}         \label{equ.thm.mc-3}
\begin{split}
&\frac1{|Q|^{1+\beta/n}}\int_Q|b(x)-b_Q|dx\\
&=\frac1{|Q|^{1+\beta/n}}\int_Q|b(x)-b_Q|\chi_Q(x)dx\\
 &\le \frac{C}{|Q|^{1+\beta/n}}
  \|(b-b_Q)\chi_Q\|_{L^{q(\cdot)}(\rn)} \|\chi_Q\|_{L^{q'(\cdot)}(\rn)}\\
 & =\frac{C}{|Q|^{\beta/n}}
    \frac{\|(b-b_Q)\chi_Q\|_{L^{q(\cdot)}(\rn)}}{\|\chi_Q\|_{L^{q(\cdot)}(\rn)}}
     \frac1{|Q|} \|\chi_Q\|_{L^{q(\cdot)}(\rn)} \|\chi_Q\|_{L^{q'(\cdot)}(\rn)}\\
   &\le \frac{C}{|Q|^{\beta/n}}
    \frac{\|(b-b_Q)\chi_Q\|_{L^{q(\cdot)}(\rn)}}{\|\chi_Q\|_{L^{q(\cdot)}(\rn)}}\\
    & \le {C}.
\end{split}
\end{equation}
This shows that $b\in {\dot{\Lambda}_{\beta}(\rn)}$ by Lemma \ref{lem.lip}
since the constant $C$ is independent of $Q$.

The proof of Theorem \ref{thm.mc-lp} is finished.
\end{proof}

From the proof of Theorem \ref{thm.mc-lp}, we have the
following Corollary.

\begin{corollary}        \label{cor-thm.mc-lp}
{(i)} Let $0<\beta<1$ and $q(\cdot) \in {\mathscr{B}(\rn)}$.
If $b \in {\dot{\Lambda}_{\beta,q(\cdot)}(\rn)}$ then
$b \in {\dot{\Lambda}_{\beta}(\rn)}$ and
$\|b\|_{\dot{\Lambda}_{\beta}} \le {C}
\|b\|_{\dot{\Lambda}_{\beta,q(\cdot)}}$ for some
positive constant $C$.

{(ii)} Let $0<\beta<1$ and $q(\cdot)$ satisfy that there exists
$p(\cdot)$ such that $(p(\cdot),q(\cdot)) \in\mathscr{B}_{p,q}^{\beta}(\rn)$.
If $b \in {\dot{\Lambda}_{\beta}(\rn)}$ then
$b \in {\dot{\Lambda}_{\beta,q(\cdot)}(\rn)}$ and
$\|b\|_{\dot{\Lambda}_{\beta,q(\cdot)}}
\le{C}\|b\|_{\dot{\Lambda}_{\beta}}$ for some
positive constant $C$.
\end{corollary}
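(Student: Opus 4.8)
The plan is to observe that this corollary merely records, in quantitative form, the constants already implicit in the proof of Theorem \ref{thm.mc-lp}; no new idea is needed beyond tracking the dependence on $\|b\|_{\dot{\Lambda}_{\beta}}$ through the estimates \eqref{equ.thm.mc-1}, \eqref{equ.thm.mc-2} and \eqref{equ.thm.mc-3}. Since the quantity defining $\|b\|_{\dot{\Lambda}_{\beta,q(\cdot)}}$ in Definition \ref{def.1.4} is precisely the supremum appearing in assertions (4) and (5) of that theorem, both parts amount to making the equivalence $(1)\Leftrightarrow(4)$ effective.

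For part (i) I would essentially reproduce the chain \eqref{equ.thm.mc-3}. The hypothesis $q(\cdot)\in\mathscr{B}(\rn)$ is exactly what Lemma \ref{lem.cube} requires, and Lemma \ref{lem.holder}(i) supplies the generalized H\"older inequality; together they give, for every cube $Q$,
$$\frac1{|Q|^{1+\beta/n}}\int_Q|b(x)-b_Q|\,dx \le \frac{C}{|Q|^{\beta/n}} \frac{\|(b-b_Q)\chi_Q\|_{L^{q(\cdot)}(\rn)}}{\|\chi_Q\|_{L^{q(\cdot)}(\rn)}} \le C\|b\|_{\dot{\Lambda}_{\beta,q(\cdot)}},$$
with $C$ independent of $Q$. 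Taking the supremum over $Q$ bounds $\|b\|_{\dot{\Lambda}_{\beta,1}}$ by $C\|b\|_{\dot{\Lambda}_{\beta,q(\cdot)}}$, and Lemma \ref{lem.lip} with $q=1$ converts this into $\|b\|_{\dot{\Lambda}_{\beta}}\le C\|b\|_{\dot{\Lambda}_{\beta,q(\cdot)}}$, which is the assertion.

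For part (ii) I would combine the proofs of $(1)\Rightarrow(2)$ and $(3)\Rightarrow(4)$ while keeping the constant explicit. The pointwise bound \eqref{equ.thm.mc-1} gives $M_b(f)\le C\|b\|_{\dot{\Lambda}_{\beta}}\,\mathfrak{M}_{\beta}(f)$; since $(p(\cdot),q(\cdot))\in\mathscr{B}_{p,q}^{\beta}(\rn)$ is exactly the hypothesis of Lemma \ref{lem.fractional}, this yields $\|M_b\|_{L^{p(\cdot)}\to L^{q(\cdot)}}\le C\|b\|_{\dot{\Lambda}_{\beta}}$. Feeding this operator-norm bound into the chain \eqref{equ.thm.mc-2}, together with the pointwise estimate $|(b-b_Q)\chi_Q|\le M_b(\chi_Q)$ and Lemma \ref{lem.cube2}, gives for each cube $Q$
$$\frac1{|Q|^{\beta/n}}\frac{\|(b-b_Q)\chi_Q\|_{L^{q(\cdot)}(\rn)}}{\|\chi_Q\|_{L^{q(\cdot)}(\rn)}} \le C\|M_b\|_{L^{p(\cdot)}\to L^{q(\cdot)}} \le C\|b\|_{\dot{\Lambda}_{\beta}},$$
and taking the supremum over $Q$ yields $\|b\|_{\dot{\Lambda}_{\beta,q(\cdot)}}\le C\|b\|_{\dot{\Lambda}_{\beta}}$.

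There is no genuine obstacle here; the only points demanding care are bookkeeping ones. I must check that every constant $C$ produced by Lemmas \ref{lem.holder}, \ref{lem.cube}, \ref{lem.cube2} and \ref{lem.fractional} is independent of the cube $Q$, so that the supremum may be taken, and that the two parts invoke the correct hypotheses: part (i) needs only $q(\cdot)\in\mathscr{B}(\rn)$ (for Lemma \ref{lem.cube}), whereas part (ii) needs the full pairing $(p(\cdot),q(\cdot))\in\mathscr{B}_{p,q}^{\beta}(\rn)$ in order to access both the fractional maximal bound of Lemma \ref{lem.fractional} and the capacity estimate of Lemma \ref{lem.cube2}. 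Since these are exactly the hypotheses stated, the corollary follows.
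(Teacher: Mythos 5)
Your proposal is correct and takes essentially the same route as the paper's own proof: part (i) is precisely the chain \eqref{equ.thm.mc-3} (H\"older's inequality via Lemma \ref{lem.holder}(i) plus Lemma \ref{lem.cube}) followed by Lemma \ref{lem.lip}, and part (ii) is the pointwise bound \eqref{equ.thm.mc-1} combined with Lemma \ref{lem.fractional} and the chain \eqref{equ.thm.mc-2}, with the operator norm of $M_b$ tracked as $C\|b\|_{\dot{\Lambda}_{\beta}}$. Your bookkeeping of which hypothesis feeds which lemma matches the paper exactly, so there is nothing to correct.
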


\begin{proof} (i) For any $q(\cdot) \in {\mathscr{B}(\rn)}$,
if $b \in {\dot{\Lambda}_{\beta,q(\cdot)}(\rn)}$ then
(\ref{equ.1.2}) holds by Definition \ref{def.1.4}, which
implies (\ref{equ.thm.mc-3}). Then the desired result follows.

(ii) If $b \in {\dot{\Lambda}_{\beta}(\rn)}$ then the wanted
result follows from (\ref{equ.thm.mc-1}), Lemma \ref{lem.fractional}
and (\ref{equ.thm.mc-2}).
\end{proof}

\begin{proof} {\bf of Theorem \ref{thm.nc-lp}}~ Similar to the
proof of Theorem \ref{thm.mc-lp}, it is sufficient to prove
$(1)\Rightarrow(2)$, $(3)\Rightarrow(4)$ and $(4)\Rightarrow(1)$.

(1) $\Longrightarrow$ (2).
Let $b\in {\dot{\Lambda}_{\beta}(\rn)}$ and $b\ge 0$.
For any $x\in \rn$ such that $M(f)(x)<\infty$,
we have
\begin{equation}        \label{equ.thm.nc-0}
\begin{split}
|[b,M](f)(x)| & =\bigg| \sup_{Q\ni{x}} \frac1{|Q|}\int_Q b(x)f(y)dy
 - \sup_{Q\ni{x}} \frac1{|Q|} \int_Q b(y)f(y)dy \bigg| \\
  &\le \sup_{Q\ni{x}} \frac1{|Q|}\int_Q|b(x)-b(y)||f(y)|dy\\
  &\le {C}\|b\|_{\dot{\Lambda}_{\beta}} \sup_{Q\ni {x}}
    \frac1{|Q|^{1-\beta/n}} \int_Q |f(y)|dy\\
    &= {C}\|b\|_{\dot{\Lambda}_{\beta}} \mathfrak{M}_{\beta}(f)(x).
\end{split}
\end{equation}
Then, by Lemma \ref{lem.fractional}, the implication $(1) \Rightarrow (2)$
is proven.

(3) $\Longrightarrow$ (4). For any fixed cube $Q$, noting
that for all $x\in {Q}$, we have (see the proof of Proposition 4.1
in \cite{bmr}, see also (2.4) in \cite{zhw1})
$$M(\chi_Q)(x)=\chi_Q(x) ~~\hbox{and}~~ M(b\chi_Q)(x)=M_Q(b)(x).
$$

By assertion (3), there exists $(p(\cdot),q(\cdot)) \in
\mathscr{B}_{p,q}^{\beta}(\rn)$ such that $[b,M]$ is bounded
from $L^{p(\cdot)}(\rn)$ to $L^{q(\cdot)}(\rn)$. Then
\begin{equation*}
\begin{split}
\|(b-M_{Q}(b))\chi_{Q}\|_{L^{q(\cdot)}(\rn)}
 &=\|(bM(\chi_Q)-M(b\chi_Q))\chi_Q\|_{L^{q(\cdot)}(\rn)}\\
 &\le \|bM(\chi_Q)-M(b\chi_Q) \|_{L^{q(\cdot)}(\rn)}\\
 &= \|[b,M](\chi_Q) \|_{L^{q(\cdot)}(\rn)}\\
  &\le {C} \|[b,M]\|_{L^{p(\cdot)}\to {L^{q(\cdot)}}}
    \|\chi_Q\|_{L^{p(\cdot)}(\rn)}.
\end{split}
\end{equation*}
This together with Lemma \ref{lem.cube2} gives that, for any cube $Q$,
$$\frac1{|Q|^{\beta/n}}
\frac{\|(b-M_{Q}(b)\big)\chi_{Q}\|_{L^{q(\cdot)}(\rn)}}{\|\chi_{Q}\|_{L^{q(\cdot)}(\rn)}}
    \le {C} \|[b,M]\|_{L^{p(\cdot)}\to {L^{q(\cdot)}}},
$$
where the constant $C$ is independent of $Q$. Thus, the proof of
(3) $\Longrightarrow$ (4) is completed.

(4) $\Longrightarrow$ (1).  We first prove
$b\in{\dot{\Lambda}_{\beta}(\rn)}$.
For any fixed cube $Q$, by using similar procedure to the proof
of ``(4.4) $\Rightarrow$ (4.3)'' in \cite{bmr},
we can obtain
\begin{equation}              \label{equ.thm.nc-1} %---------------------------
\frac1{|Q|^{1+\beta/n}} \int_Q |b(x)-b_Q|dx
 \le \frac2{|Q|^{1+\beta/n}}\int_Q|b(x)-M_Q(b)(x)|dx.
\end{equation}

We give the proof of (\ref{equ.thm.nc-1}) for completeness.
Let $E=\{x\in {Q}: b(x)\le {b_Q}\}$.
The following equality is true (see \cite{bmr} page 3331):
$$\int_E|b(x)-b_Q|dx=\int_{Q\setminus{E}} |b(x)-b_Q|dx.
$$

Since for any $x\in{E}$ we have $b(x)\le {b_Q} \le {M_Q(b)(x)}$, then
for any  $x\in{E}$,
$$|b(x)-b_Q| \le |b(x)-M_Q(b)(x)|.
$$
Thus,
\begin{equation*}
\begin{split}
\frac1{|Q|^{1+\beta/n}} \int_Q |b(x)-b_Q|dx
  &=\frac1{|Q|^{1+\beta/n}}\int_{E\cup{(Q\setminus{E})}}|b(x)-b_Q|dx\\
  &=\frac2{|Q|^{1+\beta/n}}\int_E|b(x)-b_Q|dx\\
    &\le \frac2{|Q|^{1+\beta/n}}\int_E|b(x)-M_Q(b)(x)|dx\\
        &\le \frac2{|Q|^{1+\beta/n}}\int_Q|b(x)-M_Q(b)(x)|dx.
\end{split}
\end{equation*}

On the other hand, it follows from Lemma \ref{lem.holder} (i),
assertion (4) and Lemma \ref{lem.cube} that
\begin{equation}    \label{equ.thm.nc-2}  %---------------------------
\begin{split}
&\frac1{|Q|^{1+\beta/n}}\int_Q|b(x)-M_Q(b)(x)|dx\\
 &=\frac1{|Q|^{1+\beta/n}}\int_Q|b(x)-M_Q(b)(x)|\chi_Q(x)dx \\
 &\le \frac{C}{|Q|^{1+\beta/n}}\|(b-M_Q(b))\chi_Q\|_{L^{q(\cdot)}(\rn)}
    \| \chi_Q\|_{L^{q'(\cdot)}(\rn)} \\
  &\le \frac{C}{|Q|^{\beta/n}}
    \frac{\|(b-M_Q(b))\chi_Q\|_{L^{q(\cdot)}(\rn)}}{\|\chi_Q\|_{L^{q(\cdot)}(\rn)}}
    \frac{1}{|Q|}\|\chi_Q\|_{L^{q(\cdot)}(\rn)} \chi_Q\|_{L^{q'(\cdot)}(\rn)} \\
    &\le {C},
\end{split}
\end{equation}
this together with (\ref{equ.thm.nc-1}) gives $b\in{\dot{\Lambda}_{\beta}(\rn)}$.

Now, we prove $b\ge 0$. To do this, it suffices
to show $b^{-}=0$, where $b^{-}=-\min\{b,0\}$.
Let $b^{+}=|b|-b^{-}$, then $b=b^{+}-b^{-}$.
For any fixed cube $Q$,
$$0\le {b^{+}(x)}\le |b(x)| \le {M_Q(b)(x)}, ~~~x\in{Q}.
$$
Therefore, for $x\in{Q}$, we have
$$0\le {b^{-}(x)}\le {M_Q(b)(x)}- b^{+}(x) + {b^{-}(x)}
= M_Q(b)(x)- b(x).
$$

Then, it follows from (\ref{equ.thm.nc-2}) that, for any cube $Q$,
\begin{align*}
\frac1{|Q|}\int_Q b^{-}(x)dx \le \frac1{|Q|}\int_Q |M_Q(b)(x)- b(x)|dx
\le {C}|Q|^{\beta/n}.
\end{align*}
Thus, $b^{-}=0$ follows from Lebesgue's differentiation theorem.

The proof of Theorem \ref{thm.nc-lp} is completed.
\end{proof}

Similar to Corollary \ref{cor-thm.mc-lp},
from the proof of Theorem \ref{thm.nc-lp}, we
obtain the following result.

\begin{corollary}        \label{cor-thm.nc-lp}
(i) Let $0<\beta<1$ and $q(\cdot) \in {\mathscr{B}(\rn)}$.
If $b \in {\dot{\Lambda}_{\beta,q(\cdot)}^{\ast}(\rn)}$ then
$b \in {\dot{\Lambda}_{\beta}^{+}(\rn)}$ and
$\|b\|_{\dot{\Lambda}_{\beta}} \le {C}
\|b\|_{\dot{\Lambda}^{\ast}_{\beta,q(\cdot)}}$ for some
positive constant $C$.

(ii) Let $0<\beta<1$ and $q(\cdot)$ satisfy that there exists
$p(\cdot)$ such that $(p(\cdot),q(\cdot)) \in\mathscr{B}_{p,q}^{\beta}(\rn)$.
If $b \in {\dot{\Lambda}^{+}_{\beta}(\rn)}$ then
$b \in {\dot{\Lambda}^{\ast}_{\beta,q(\cdot)}(\rn)}$ and
$\|b\|_{\dot{\Lambda}^{\ast}_{\beta,q(\cdot)}}
\le{C}\|b\|_{\dot{\Lambda}_{\beta}}$ for some
positive constant $C$.
\end{corollary}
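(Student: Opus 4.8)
The plan is to mirror the structure of Corollary~\ref{cor-thm.mc-lp}, extracting the two directions directly from the quantitative estimates already established inside the proof of Theorem~\ref{thm.nc-lp}. The point is that every inequality in the chain $(1)\Rightarrow(2)\Rightarrow(3)\Rightarrow(4)\Rightarrow(1)$ of that theorem carries an explicit dependence on either $\|b\|_{\dot{\Lambda}_{\beta,q(\cdot)}^{\ast}}$ or $\|b\|_{\dot{\Lambda}_{\beta}}$, and parts (i) and (ii) of the corollary are precisely the two halves of this chain read off for a single fixed exponent $q(\cdot)$ rather than quantified over the class $\mathscr{B}_{p,q}^{\beta}(\rn)$.

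For part (i), I would start from the hypothesis $b\in\dot{\Lambda}_{\beta,q(\cdot)}^{\ast}(\rn)$ with $q(\cdot)\in\mathscr{B}(\rn)$, so that the supremum defining $\|b\|_{\dot{\Lambda}_{\beta,q(\cdot)}^{\ast}}$ is finite. Since $q(\cdot)\in\mathscr{B}(\rn)$ is exactly the hypothesis that Lemma~\ref{lem.cube} requires, the chain of estimates in~(\ref{equ.thm.nc-2}) (generalized H\"older, the finite supremum, and Lemma~\ref{lem.cube}) applies verbatim and yields
$$\frac1{|Q|^{1+\beta/n}}\int_Q|b(x)-M_Q(b)(x)|\,dx \le C\|b\|_{\dot{\Lambda}_{\beta,q(\cdot)}^{\ast}}$$
for every cube $Q$, with $C$ independent of $Q$. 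Feeding this into~(\ref{equ.thm.nc-1}) bounds the mean oscillation of $b$ by $C\|b\|_{\dot{\Lambda}_{\beta,q(\cdot)}^{\ast}}$, and Lemma~\ref{lem.lip} then gives $b\in\dot{\Lambda}_{\beta}(\rn)$ together with $\|b\|_{\dot{\Lambda}_{\beta}}\le C\|b\|_{\dot{\Lambda}_{\beta,q(\cdot)}^{\ast}}$. To upgrade this to $b\in\dot{\Lambda}_{\beta}^{+}(\rn)$ I would reuse the nonnegativity argument of Theorem~\ref{thm.nc-lp}: writing $b=b^{+}-b^{-}$ and using $0\le b^{-}(x)\le M_Q(b)(x)-b(x)$ on $Q$, the same displayed estimate forces $\frac1{|Q|}\int_Q b^{-}\le C|Q|^{\beta/n}\|b\|_{\dot{\Lambda}_{\beta,q(\cdot)}^{\ast}}$, whence $b^{-}=0$ almost everywhere by Lebesgue's differentiation theorem.

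For part (ii), I would run the chain in the opposite direction. Assuming $0\le b\in\dot{\Lambda}_{\beta}(\rn)$, the pointwise bound~(\ref{equ.thm.nc-0}) gives $|[b,M](f)|\le C\|b\|_{\dot{\Lambda}_{\beta}}\mathfrak{M}_{\beta}(f)$, and Lemma~\ref{lem.fractional} (applicable because $(p(\cdot),q(\cdot))\in\mathscr{B}_{p,q}^{\beta}(\rn)$) upgrades this to $\|[b,M]\|_{L^{p(\cdot)}\to L^{q(\cdot)}}\le C\|b\|_{\dot{\Lambda}_{\beta}}$. Then the $(3)\Rightarrow(4)$ computation, using $M(\chi_Q)=\chi_Q$ and $M(b\chi_Q)=M_Q(b)$ on $Q$ to identify $(b-M_Q(b))\chi_Q$ with $[b,M](\chi_Q)\chi_Q$, together with Lemma~\ref{lem.cube2}, yields
$$\frac1{|Q|^{\beta/n}}\frac{\|(b-M_Q(b))\chi_Q\|_{L^{q(\cdot)}(\rn)}}{\|\chi_Q\|_{L^{q(\cdot)}(\rn)}}\le C\|[b,M]\|_{L^{p(\cdot)}\to L^{q(\cdot)}}\le C\|b\|_{\dot{\Lambda}_{\beta}},$$
uniformly in $Q$; taking the supremum gives $b\in\dot{\Lambda}_{\beta,q(\cdot)}^{\ast}(\rn)$ with $\|b\|_{\dot{\Lambda}_{\beta,q(\cdot)}^{\ast}}\le C\|b\|_{\dot{\Lambda}_{\beta}}$.

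I do not expect a serious obstacle, since every step is already present inside the proof of Theorem~\ref{thm.nc-lp}; the only point requiring care is the bookkeeping of the two exponent hypotheses. In part (i) the bare assumption $q(\cdot)\in\mathscr{B}(\rn)$ must suffice, which is exactly why the argument routes through Lemma~\ref{lem.cube} rather than through any $\mathscr{B}_{p,q}^{\beta}$ machinery, whereas part (ii) genuinely needs the companion exponent $p(\cdot)$ with $(p(\cdot),q(\cdot))\in\mathscr{B}_{p,q}^{\beta}(\rn)$ so that Lemmas~\ref{lem.fractional} and~\ref{lem.cube2} can be invoked.
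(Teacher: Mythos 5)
Your proposal is correct and is exactly the argument the paper intends: the paper proves this corollary by re-reading the quantitative estimates inside the proof of Theorem~\ref{thm.nc-lp} (for part (i), the $(4)\Rightarrow(1)$ chain through (\ref{equ.thm.nc-1}), (\ref{equ.thm.nc-2}), Lemma~\ref{lem.cube} and the $b^{-}=0$ argument, which only need $q(\cdot)\in\mathscr{B}(\rn)$; for part (ii), the $(1)\Rightarrow(2)$ and $(3)\Rightarrow(4)$ steps via (\ref{equ.thm.nc-0}), Lemma~\ref{lem.fractional} and Lemma~\ref{lem.cube2}), just as Corollary~\ref{cor-thm.mc-lp} is extracted from Theorem~\ref{thm.mc-lp}. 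Your bookkeeping of which exponent hypotheses each half requires matches the paper.
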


\section{Proof of Theorem \ref{thm.nc-sharp}}

\begin{proof}{\bf of Theorem \ref{thm.nc-sharp}} ~We only need to prove
(1) $\Rightarrow$ (2), (3) $\Rightarrow$ (4) and (4) $\Rightarrow$ (1).

(1) $\Longrightarrow$ (2).~ Since $b\in{\dot{\Lambda}_{\beta}(\rn)}$
and $b\ge 0$, then for any fixed $x\in\rn$ such that $Mf(x)<\infty$,
\begin{equation*}
\begin{split}
|[b,M^{\sharp}]f(x)| &=\bigg|\sup_{Q\ni{x}}\frac{b(x)}{|Q|}
   \int_Q\big|f(y)-f_Q\big|dy
     -\sup_{Q\ni{x}}\frac{1}{|Q|}\int_Q\big|b(y)f(y)-(bf)_Q\big|dy \bigg|\\
&\le\sup_{Q\ni{x}}\frac{1}{|Q|}\int_Q \big|\big(b(y)-b(x)\big)f(y)
 +b(x)f_Q -(bf)_Q\big|dy\\
&\le\sup_{Q\ni{x}}\bigg\{\frac{1}{|Q|}\int_Q |b(y)-b(x)||f(y)|dy
 +\big|b(x)f_Q -(bf)_Q\big|\bigg\}\\
&\le {C}\|b\|_{\dot{\Lambda}_{\beta}} \mathfrak{M}_{\beta}(f)(x)
 +\sup_{Q\ni{x}} \bigg|\frac{b(x)}{|Q|}\int_Qf(z)dz
 -\frac{1}{|Q|}\int_Qb(z)f(z)dz\bigg|\\
&\le {C}\|b\|_{\dot{\Lambda}_{\beta}} \mathfrak{M}_{\beta}(f)(x)
 +\sup_{Q\ni{x}}\frac{1}{|Q|}\int_Q|b(x)-b(z)||f(z)|dz\\
&\le{C}\|b\|_{\dot{\Lambda}_{\beta}} \mathfrak{M}_{\beta}(f)(x).
\end{split}
\end{equation*}
By Lemma \ref{lem.fractional}, $[b,M^{\sharp}]$ is
bounded from $L^{p(\cdot)}(\rn)$ to $L^{q(\cdot)}(\rn)$ for all
$(p(\cdot),q(\cdot)) \in \mathscr{B}_{p,q}^{\beta}(\rn)$.

(3) $\Longrightarrow$ (4). For any fixed cube $Q$, we have
(see \cite{bmr} page 3333 or \cite{zhw3} page 1383 for details),
$$M^{\sharp}(\chi_Q)(x)=1/2, ~~\hbox{for~all~}x \in {Q}.
$$
Then, by assertion (3) and Lemma \ref{lem.cube2}, we have
\begin{equation*}
\begin{split}
\big\|\big(b- 2M^{\sharp}(b\chi_{Q})\big)\chi_Q
  \big\|_{L^{q(\cdot)}(\rn)}
&=\bigg\|2\Big(\frac{1}{2}b -M^{\sharp}(b\chi_Q)\Big)\chi_Q
  \bigg\|_{L^{q(\cdot)}(\rn)} \\
&=\big\|2\big(bM^{\sharp}(\chi_Q)
  -M^{\sharp}(b\chi_{Q})\big)\chi_Q \big\|_{L^{q(\cdot)}(\rn)}\\
&\le 2\big\|[b,M^{\sharp}](\chi_{Q})\big\|_{L^{q(\cdot)}(\rn)}\\
&\le {C} \big\|[b,M^{\sharp}]\big\|_{L^{p(\cdot)}\to{L^{q(\cdot)}}}
  \|\chi_{Q}\|_{L^{p(\cdot)}(\rn)}\\
& \le {C}\big\|[b,M^{\sharp}]\big\|_{L^{p(\cdot)}\to{L^{q(\cdot)}}}
     |Q|^{\beta/n} \|\chi_Q\|_{L^{q(\cdot)}(\rn)},
\end{split}
\end{equation*}
where the constant $C$ is independent of $Q$. Then
$$\sup_{Q}\frac1{|Q|^{\beta/n}}
\frac{\|(b-2M^{\sharp}(b\chi_{Q}))\chi_Q\|_{L^{q(\cdot)}(\rn)}}
  {\|\chi_Q\|_{L^{q(\cdot)}(\rn)}}
     \le{C}\big\|[b,M^{\sharp}]\big\|_{L^{p(\cdot)}\to{L^{q(\cdot)}}}.
$$

(4) $\Longrightarrow$ (1). We first prove $b\in{\dot{\Lambda}_{\beta}(\rn)}$.
For any cube $Q\subset \rn$, we have (see (2) in \cite{bmr}):
\begin{equation}        \label{equ.csharp-1}%%-------------------------------------------
\begin{split}
|b_Q|\le 2M^{\sharp}(b\chi_Q)(x)~~ \hbox{for}~~ x\in {Q}.
\end{split}
\end{equation}
%-----------------------------equ----------------------------

Let $E=\{x\in Q: b(x)\le b_Q\}$, then
$$\int_E |b(x)-b_Q|dx =\int_{Q\setminus{E}} |b(x)-b_Q|dx.
$$

Since for any $ x\in E$, we have $b(x)\le b_Q\le |b_Q| \le 2
M^{\sharp}(b\chi_Q)(x)$, then
$$|b(x)- b_Q|\le |b(x)-2  M^{\sharp}(b\chi_Q)(x)|,
~~\hbox{for}~~ x\in {E}.
$$

By Lemma \ref{lem.holder} (i), assertion (4) and Lemma \ref{lem.cube},
we obtain
\begin{equation*} %         \label{equ.csharp-2}%%-------------------------------------------
\begin{split}
\frac{1}{|Q|^{1+\beta/n}} \int_{Q}|b(x)-b_Q|dx
  &= \frac{2}{|Q|^{1+\beta/n}}\int_E |b(x)-b_Q|dx\\
&\le \frac{2}{|Q|^{1+\beta/n}}\int_{E}
 |b(x)-2M^{\sharp}(b\chi_Q)(x)|dx\\
&\le \frac{2}{|Q|^{1+\beta/n}} \int_{Q}
  |b(x)-2M^{\sharp}(b\chi_Q)(x)|dx\\
&\le \frac{C}{|Q|^{1+\beta/n}} \big\|(b-2M^{\sharp}(b\chi_Q))
  \chi_Q \big\|_{L^{q(\cdot)}(\rn)}
  \|\chi_{Q}\|_{L^{q'(\cdot)}(\rn)} \\
  %%%%%%%%%%%%%%%%%%%%%%%%%%%%%%%%%%%%%%%%%%%%%%%%%%%%%%%%%%%%%%%%
  &\le \frac{C}{|Q|^{\beta/n}}
  \frac{\big\|(b-2M^{\sharp}(b\chi_Q))\chi_Q \big\|_{L^{q(\cdot)}(\rn)}}
  { \|\chi_{Q}\|_{L^{q(\cdot)}(\rn)}}\\
  &\qquad \qquad \times
     \frac{1}{|Q|} \|\chi_{Q}\|_{L^{q(\cdot)}(\rn)}
         \|\chi_{Q}\|_{L^{q'(\cdot)}(\rn)}\\
 &  \le {C},
\end{split}
\end{equation*}
which implies $b\in{\dot{\Lambda}_{\beta}(\rn)}$ by Lemma \ref{lem.lip}.

Now, let us prove $b\ge 0$. It also suffices to show $b^{-}=0$,
where $b^{-}=-\min\{b,0\}$ and let $b^{+}=|b|-b^{-}$.
By (\ref{equ.csharp-1}) we have, for $x\in Q$,
$$2M^{\sharp}(b\chi_Q)(x)-b(x) \ge |b_Q|-b(x)
 = |b_Q|-b^{+}(x)+b^{-}(x).
$$
Then
\begin{equation}         \label{equ.csharp-3}  %-----------------------
\begin{split}
\frac{1}{|Q|} \int_{Q} \big|2 M^{\sharp} (b\chi_Q)(x)-b(x)\big|dx
&\ge \frac{1}{|Q|}\int_{Q} \big(2 M^{\sharp}(b\chi_Q)(x)-b(x)\big)dx \\
&\ge \frac{1}{|Q|}\int_{Q} \big(|b_Q|
  - b^{+}(x)+b^{-}(x)\big)dx  \\
&= |b_Q| -\frac{1}{|Q|}\int_{Q} b^{+}(x)dx
  + \frac{1}{|Q|}\int_{Q}  b^{-}(x)dx.
\end{split}
\end{equation}
%----------------------- equ ------------------------

On the other hand, applying Lemma \ref{lem.holder} (i),
assertion (4) and Lemma \ref{lem.cube}, we have
\begin{equation*}
\begin{split}
&\frac{1}{|Q|}\int_{Q}\big|2M^{\sharp}(b\chi_Q)(x)-b(x)\big|dx\\
&\le \frac{C}{|Q|}\big\|\big(2M^{\sharp}(b\chi_Q)(x)
   -b(x)\big)\chi_Q\big\|_{L^{q(\cdot)}(\rn)}
   \|\chi_Q\|_{L^{q'(\cdot)}(\rn)}\\
&\le {C}|Q|^{-1}|Q|^{\beta/n} \|\chi_Q\|_{L^{q(\cdot)}(\rn)}
   \|\chi_Q\|_{L^{q'(\cdot)}(\rn)}\\
   & \le {C}|Q|^{\beta/n},
\end{split}
\end{equation*}
where the constant $C$ is independent of $Q$.
This, together with (\ref{equ.csharp-3}), gives
\begin{equation}         \label{equ.csharp-4}
|b_Q| -\frac{1}{|Q|}\int_{Q} b^{+}(x)dx
  + \frac{1}{|Q|}\int_{Q}  b^{-}(x)dx \le {C}|Q|^{\beta/n}.
\end{equation}

Let the side length of $Q$ tends to $0$ (then $|Q|\to 0$)
with $x\in Q$, Lebesgue's differentiation theorem assures
that the limit of the left-hand side of (\ref{equ.csharp-4})
equals to
$$|b(x)|-b^{+}(x)+b^{-}(x)=2b^{-}(x)=2|b^{-}(x)|.
$$
And, the right-hand side of (\ref{equ.csharp-4}) tends to $0$.
So, we have $b^{-}=0$.

The proof of Theorem \ref{thm.nc-sharp} is completed.
\end{proof}

Similar to Corollary \ref{cor-thm.mc-lp},
from the proof of Theorem \ref{thm.nc-sharp}, we
obtain the following result.

\begin{corollary}        \label{cor-thm.nc-sharp}
(i) Let $0<\beta<1$ and $q(\cdot) \in {\mathscr{B}(\rn)}$.
If $b \in {\dot{\Lambda}^{\sharp}_{\beta,q(\cdot)}(\rn)}$ then
$b \in {\dot{\Lambda}_{\beta}^{+}(\rn)}$ and
$\|b\|_{\dot{\Lambda}_{\beta}} \le {C}
\|b\|_{\dot{\Lambda}^{\sharp}_{\beta,q(\cdot)}}$
for some positive constant $C$.

(ii) Let $0<\beta<1$ and $q(\cdot)$ satisfy that there exists
$p(\cdot)$ such that $(p(\cdot),q(\cdot)) \in\mathscr{B}_{p,q}^{\beta}(\rn)$.
If $b \in {\dot{\Lambda}^{+}_{\beta}(\rn)}$ then
$b \in {\dot{\Lambda}^{\sharp}_{\beta,q(\cdot)}(\rn)}$ and
$\|b\|_{\dot{\Lambda}^{\sharp}_{\beta,q(\cdot)}}
\le{C}\|b\|_{\dot{\Lambda}_{\beta}}$
for some positive constant $C$.
\end{corollary}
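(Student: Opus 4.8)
The plan is to extract the quantitative norm comparisons that are already implicit in the proof of Theorem \ref{thm.nc-sharp}, in exactly the way Corollary \ref{cor-thm.mc-lp} was read off from the proof of Theorem \ref{thm.mc-lp}. The two parts correspond to the two directions (4)$\Rightarrow$(1) and (1)$\Rightarrow$(4) of that proof, with the constants made explicit.

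For part (i), I would start from the assumption $b\in\dot{\Lambda}^{\sharp}_{\beta,q(\cdot)}(\rn)$, which by Definition \ref{def.1.4} means precisely that the supremum appearing in assertion (4) of Theorem \ref{thm.nc-sharp} is finite and equals $\|b\|_{\dot{\Lambda}^{\sharp}_{\beta,q(\cdot)}}$. I would then feed this bound into the chain of inequalities of the (4)$\Rightarrow$(1) step: using (\ref{equ.csharp-1}), the splitting over $E=\{x\in Q:b(x)\le b_Q\}$, Lemma \ref{lem.holder}(i) and Lemma \ref{lem.cube}, one obtains
$$\frac{1}{|Q|^{1+\beta/n}}\int_Q|b(x)-b_Q|\,dx \le C\,\|b\|_{\dot{\Lambda}^{\sharp}_{\beta,q(\cdot)}}$$
with $C$ independent of $Q$. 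By Lemma \ref{lem.lip} this yields $b\in\dot{\Lambda}_{\beta}(\rn)$ together with $\|b\|_{\dot{\Lambda}_{\beta}}\le C\|b\|_{\dot{\Lambda}^{\sharp}_{\beta,q(\cdot)}}$. Nonnegativity is then obtained by repeating the $b^{-}=0$ argument verbatim, since (\ref{equ.csharp-3}) and (\ref{equ.csharp-4}) again use only these two lemmas and the finiteness of $\|b\|_{\dot{\Lambda}^{\sharp}_{\beta,q(\cdot)}}$. Hence $b\in\dot{\Lambda}^{+}_{\beta}(\rn)$.

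For part (ii), I would run the (1)$\Rightarrow$(2) and (3)$\Rightarrow$(4) steps in sequence. Starting from $b\in\dot{\Lambda}^{+}_{\beta}(\rn)$, the pointwise bound $|[b,M^{\sharp}]f(x)|\le C\|b\|_{\dot{\Lambda}_{\beta}}\mathfrak{M}_{\beta}(f)(x)$ together with Lemma \ref{lem.fractional} shows that $[b,M^{\sharp}]$ is bounded from $L^{p(\cdot)}(\rn)$ to $L^{q(\cdot)}(\rn)$ with operator norm at most $C\|b\|_{\dot{\Lambda}_{\beta}}$; this is where the hypothesis that some admissible $p(\cdot)$ exists with $(p(\cdot),q(\cdot))\in\mathscr{B}_{p,q}^{\beta}(\rn)$ enters. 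Inserting this operator-norm bound into the (3)$\Rightarrow$(4) computation (which relies on $M^{\sharp}(\chi_Q)=1/2$ on $Q$ and Lemma \ref{lem.cube2}) gives
$$\sup_{Q}\frac{1}{|Q|^{\beta/n}}\frac{\|(b-2M^{\sharp}(b\chi_Q))\chi_Q\|_{L^{q(\cdot)}(\rn)}}{\|\chi_Q\|_{L^{q(\cdot)}(\rn)}}\le C\|b\|_{\dot{\Lambda}_{\beta}},$$
which is exactly $\|b\|_{\dot{\Lambda}^{\sharp}_{\beta,q(\cdot)}}\le C\|b\|_{\dot{\Lambda}_{\beta}}$, so $b\in\dot{\Lambda}^{\sharp}_{\beta,q(\cdot)}(\rn)$.

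The main point to watch, and the only genuine difference between the two parts, is the admissible range of $q(\cdot)$. Part (i) needs only $q(\cdot)\in\mathscr{B}(\rn)$, because the reverse estimates there invoke just Lemma \ref{lem.cube} and the generalized H\"older inequality, neither of which requires a companion exponent; this is the source of the asymmetry in the hypotheses. Part (ii), by contrast, really does need a pair $(p(\cdot),q(\cdot))\in\mathscr{B}_{p,q}^{\beta}(\rn)$ so that the fractional maximal bound of Lemma \ref{lem.fractional} applies and converts the Lipschitz control into boundedness of the commutator. No new estimates are required beyond those already established in Section 4; the work is simply to track the dependence on the norms through the inequalities of that proof.
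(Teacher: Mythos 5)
Your proposal is correct and follows essentially the same route as the paper: the paper states this corollary as a direct byproduct of the proof of Theorem \ref{thm.nc-sharp} (in the same way Corollary \ref{cor-thm.mc-lp} was extracted from Theorem \ref{thm.mc-lp}), with part (i) read off from the $(4)\Rightarrow(1)$ chain via (\ref{equ.csharp-1}), Lemma \ref{lem.holder}(i), Lemma \ref{lem.cube} and the $b^{-}=0$ argument of (\ref{equ.csharp-3})--(\ref{equ.csharp-4}), and part (ii) from the $(1)\Rightarrow(2)$ and $(3)\Rightarrow(4)$ steps using Lemmas \ref{lem.fractional} and \ref{lem.cube2}. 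Your observation about the asymmetry in the hypotheses on $q(\cdot)$ between the two parts is also exactly the right point and is consistent with the paper's formulation.
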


\section{Proof of Theorems \ref{thm.equiv-norm-1} \--- \ref{thm.equiv-norm-3}}

\begin{proof}{\bf of Theorem \ref{thm.equiv-norm-1}} ~
By Corollary \ref{cor-thm.mc-lp}, the only thing we need to do
is to prove that $b\in {\dot{\Lambda}_{\beta}}(\rn)$ implies
$b\in {\dot{\Lambda}_{\beta,q(\cdot)}(\rn)}$ and
$\|b\|_{\dot{\Lambda}_{\beta,q(\cdot)}}\le{C}\|b\|_{\dot{\Lambda}_{\beta}}$
for all $q(\cdot) \in\mathscr{B}(\rn)$.

For any fixed $q(\cdot) \in\mathscr{B}(\rn)$, choose
$r>n/(n-\beta)$. By Remark \ref{remark1},
$rq(\cdot)(n-\beta)/n \in\mathscr{B}(\rn)$ and
$rq(\cdot)\in\mathscr{B}(\rn)$ since $r>n/(n-\beta)$.
Set $q_0(\cdot)=rq(\cdot)$ and define $p_0(\cdot)$ by
$$\frac1{p_0(x)}=\frac1{q_0(x)}+\frac{\beta}{n}.
$$

It is easy to check that
$(p_0(\cdot),q_0(\cdot)) \in{\mathscr{B}}^{\beta}_{p,q}(\rn)$.
Then, for any $b\in {\dot{\Lambda}_{\beta}}(\rn)$,
by Corollary \ref{cor-thm.mc-lp} we have
$b\in {\dot{\Lambda}_{\beta,q_0(\cdot)}(\rn)}$ and
$\|b\|_{\dot{\Lambda}_{\beta,q_0(\cdot)}}\le{C}\|b\|_{\dot{\Lambda}_{\beta}}$.

For any fixed cube $Q$, since
$$\frac1{q(\cdot)}=\frac1{rq(\cdot)}+\frac1{r'q(\cdot)}
=\frac1{q_0(\cdot)}+\frac1{r'q(\cdot)},
$$
then, it follows from Lemma \ref{lem.holder} (ii),
Lemma \ref{lem.s-norm} and
$b\in {\dot{\Lambda}_{\beta,q_0(\cdot)}(\rn)}$ that
\begin{align*}
\frac1{|Q|^{\beta/n}}\frac{\|(b-b_Q)\chi_Q\|_{L^{q(\cdot)}(\rn)}}
  {\|\chi_Q\|_{L^{q(\cdot)}(\rn)}}
&\le \frac{C\|(b-b_Q)\chi_Q\|_{L^{q_0(\cdot)}(\rn)}
  \|\chi_Q\|_{L^{r'q(\cdot)}(\rn)}}{|Q|^{\beta/n}\|\chi_Q\|_{L^{q(\cdot)}(\rn)}}\\
 & \le{C}\|b\|_{\dot{\Lambda}_{\beta,q_0(\cdot)}}
  \frac{\|\chi_Q\|_{L^{q_0(\cdot)}(\rn)}
   \|\chi_Q\|_{L^{r'q(\cdot)}(\rn)}}{\|\chi_Q\|_{L^{q(\cdot)}(\rn)}}\\
 & \le{C}\|b\|_{\dot{\Lambda}_{\beta}}
  \frac{\|\chi_Q\|^{1/r}_{L^{q(\cdot)}(\rn)}
   \|\chi_Q\|^{1/r'}_{L^{q(\cdot)}(\rn)}}{\|\chi_Q\|_{L^{q(\cdot)}(\rn)}}\\
 & \le{C}\|b\|_{\dot{\Lambda}_{\beta}},
 \end{align*}
which implies
$b\in {\dot{\Lambda}_{\beta,q(\cdot)}(\rn)}$ and
$\|b\|_{\dot{\Lambda}_{\beta,q(\cdot)}}\le{C}\|b\|_{\dot{\Lambda}_{\beta}}$
for all $q(\cdot) \in\mathscr{B}(\rn)$.
\end{proof}

\begin{proof}{\bf of Theorem \ref{thm.equiv-norm-2}} ~
By Corollary \ref{cor-thm.nc-lp}, it suffices to prove that
for any $b\in {\dot{\Lambda}^{+}_{\beta}}(\rn)$ we have
$b\in {\dot{\Lambda}^{\ast}_{\beta,q(\cdot)}(\rn)}$ and
$\|b\|_{\dot{\Lambda}^{\ast}_{\beta,q(\cdot)}}
\le{C}\|b\|_{\dot{\Lambda}_{\beta}}$
for all $q(\cdot) \in\mathscr{B}(\rn)$.

For any $q(\cdot) \in\mathscr{B}(\rn)$, let $r$,
$q_0(\cdot)$ and $p_0(\cdot)$ be the same as in
the proof of Theorem \ref{thm.equiv-norm-1}.
Then, for any $b\in {\dot{\Lambda}^{+}_{\beta}}(\rn)$,
by Corollary \ref{cor-thm.nc-lp} we have
$b\in {\dot{\Lambda}^{\ast}_{\beta,q_0(\cdot)}(\rn)}$ and
$\|b\|_{\dot{\Lambda}^{\ast}_{\beta,q_0(\cdot)}}\le{C}\|b\|_{\dot{\Lambda}_{\beta}}$.

For any fixed cube $Q$, note that
$b\in {\dot{\Lambda}^{\ast}_{\beta,q_0(\cdot)}(\rn)}$,
by Lemma \ref{lem.holder} (ii) and Lemma \ref{lem.s-norm},
\begin{equation*}
\begin{split}
\frac1{|Q|^{\beta/n}}\frac{\|(b-M_Q(b))\chi_Q\|_{L^{q(\cdot)}(\rn)}}
  {\|\chi_Q\|_{L^{q(\cdot)}(\rn)}}
&\le \frac{C\|(b-M_Q(b))\chi_Q\|_{L^{q_0(\cdot)}(\rn)}
  \|\chi_Q\|_{L^{r'q(\cdot)}(\rn)}}{|Q|^{\beta/n}\|\chi_Q\|_{L^{q(\cdot)}(\rn)}}\\
 & \le{C}\|b\|_{\dot{\Lambda}^{\ast}_{\beta,q_0(\cdot)}}
  \frac{\|\chi_Q\|_{L^{q_0(\cdot)}(\rn)}
   \|\chi_Q\|_{L^{r'q(\cdot)}(\rn)}}{\|\chi_Q\|_{L^{q(\cdot)}(\rn)}}\\
 & \le{C}\|b\|_{\dot{\Lambda}_{\beta}}
  \frac{\|\chi_Q\|^{1/r}_{L^{q(\cdot)}(\rn)}
   \|\chi_Q\|^{1/r'}_{L^{q(\cdot)}(\rn)}}{\|\chi_Q\|_{L^{q(\cdot)}(\rn)}}\\
 & \le{C}\|b\|_{\dot{\Lambda}_{\beta}},
\end{split}
\end{equation*}
which shows
$b\in {\dot{\Lambda}^{\ast}_{\beta,q(\cdot)}(\rn)}$ and
$\|b\|_{\dot{\Lambda}^{\ast}_{\beta,q(\cdot)}}\le{C}\|b\|_{\dot{\Lambda}_{\beta}}$
for all $q(\cdot) \in\mathscr{B}(\rn)$.
\end{proof}

\begin{proof}{\bf of Theorem \ref{thm.equiv-norm-3}} ~
By Corollary \ref{cor-thm.nc-sharp}, it is enough to prove
that for any $b\in {\dot{\Lambda}^{+}_{\beta}}(\rn)$ we have
$b\in {\dot{\Lambda}^{\sharp}_{\beta,q(\cdot)}(\rn)}$ and
$\|b\|_{\dot{\Lambda}^{\sharp}_{\beta,q(\cdot)}}\le{C}\|b\|_{\dot{\Lambda}_{\beta}}$
for all $q(\cdot) \in\mathscr{B}(\rn)$.

For any $q(\cdot) \in\mathscr{B}(\rn)$, let $r$,
$q_0(\cdot)$ and $p_0(\cdot)$ be the same as in
the proof of Theorem \ref{thm.equiv-norm-1}.
Then, for any $b\in {\dot{\Lambda}^{+}_{\beta}}(\rn)$,
by Corollary \ref{cor-thm.nc-sharp} we have
$b\in {\dot{\Lambda}^{\sharp}_{\beta,q_0(\cdot)}(\rn)}$ and
$\|b\|_{\dot{\Lambda}^{\sharp}_{\beta,q_0(\cdot)}}\le{C}\|b\|_{\dot{\Lambda}_{\beta}}$.

For any fixed cube $Q$, note that
$b\in {\dot{\Lambda}^{\ast}_{\beta,q_0(\cdot)}(\rn)}$,
by Lemma \ref{lem.holder} (ii) and Lemma \ref{lem.s-norm},
\begin{equation*}
\begin{split}
\frac1{|Q|^{\beta/n}}\frac{\|(b-2M^{\sharp}(b\chi_Q))\chi_Q\|_{L^{q(\cdot)}(\rn)}}
  {\|\chi_Q\|_{L^{q(\cdot)}(\rn)}}
&\le \frac{C\|(b-2M^{\sharp}(b\chi_Q))\chi_Q\|_{L^{q_0(\cdot)}(\rn)}
  \|\chi_Q\|_{L^{r'q(\cdot)}(\rn)}}{|Q|^{\beta/n}\|\chi_Q\|_{L^{q(\cdot)}(\rn)}}\\
 & \le{C}\|b\|_{\dot{\Lambda}^{\sharp}_{\beta,q_0(\cdot)}}
  \frac{\|\chi_Q\|_{L^{q_0(\cdot)}(\rn)}
   \|\chi_Q\|_{L^{r'q(\cdot)}(\rn)}}{\|\chi_Q\|_{L^{q(\cdot)}(\rn)}}\\
 & \le{C}\|b\|_{\dot{\Lambda}_{\beta}}
  \frac{\|\chi_Q\|^{1/r}_{L^{q(\cdot)}(\rn)}
   \|\chi_Q\|^{1/r'}_{L^{q(\cdot)}(\rn)}}{\|\chi_Q\|_{L^{q(\cdot)}(\rn)}}\\
 & \le{C}\|b\|_{\dot{\Lambda}_{\beta}},
\end{split}
\end{equation*}
which shows
$b\in {\dot{\Lambda}^{\sharp}_{\beta,q(\cdot)}(\rn)}$ and
$\|b\|_{\dot{\Lambda}^{\sharp}_{\beta,q(\cdot)}}\le{C}\|b\|_{\dot{\Lambda}_{\beta}}$
for all $q(\cdot) \in\mathscr{B}(\rn)$.
\end{proof}

%\section*{Acknowledgments}

%The author would like to express his gratitude to the referee for his/her very valuable comments and suggestions.


\begin{thebibliography}{99}


\bibitem{agkm} M. Agcayazi, A. Gogatishvili, K. Koca and R.
Mustafayev, A note on maximal commutators and commutators of maximal
functions, J. Math. Soc. Japan, (2) {\bf 67} (2015), 581--593.

\bibitem{bmr} J. Bastero, M. Milman and F. J. Ruiz, Commutators
for the maximal and sharp functions, Proc. Amer. Math. Soc., (11)
{\bf 128} (2000), 3329--3334.

\bibitem{bijz} A. Bonami, T. Iwaniec, P. Jones and M.
Zinsmeister, On the product of functions in $BMO$ and $H^1$, Ann.
Inst. Fourier, Grenoble, (5) {\bf 57} (2007), 1405--1439.

\bibitem{ccf} C. Capone, D. Cruz-Uribe and A. Fiorenza, The fractional
maximal operator and fractional integrals on variable $L^p$ spaces,
Rev. Mat. Iberoam., (3) {\bf 23} (2007), 743--770.

\bibitem{crw} R. R. Coifman, R. Rochberg and G. Weiss,
Factorization theorems for Hardy spaces in several variables, Ann.
of Math., {\bf 103} (1976), 611--635.

\bibitem{cf} D. Cruz-Uribe and A. Fiorenza, Variable Lebesgue Spaces:
Foundations and Harmonic Analysis, Springer-Basel, 2013.

\bibitem{cfmp} D. Cruz-Uribe, A. Fiorenza, J. M. Martell and C. P\'erez,
The boundedness of classical operators on variable $L^{p}$ spaces,
Ann. Acad. Sci. Fenn. Math. {\bf 31} (2006), 239--264.

\bibitem{cu-w} D. Cruz-Uribe and L.-A. D. Wang, Variable Hardy
spaces, Indiana Univ. Math. J., (2) {\bf 63} (2014), 447--493.

\bibitem{dhhr} L. Diening, P. Harjulehto, P. H\"ast\"o and M.
R{u}\v{z}i\v{c}ka, Lebesgue and Sobolev Spaces with Variable
Exponents, Lecture Notes in Math. Vol. 2017, Springer-Verlag, 2011.

\bibitem{ds} R. A. DeVore and R. C. Sharpley, Maximal functions
measuring smoothness, Mem. Amer. Math. Soc., (293) {\bf 47} (1984),
1--115.

\bibitem{ghst} J. Garc\'ia-Cuerva, E. Harboure, C. Segovia and
J. L. Torrea, Weighted norm inequalities for  commutators of
strongly singular integrals, Indiana Univ. Math. J., (4) {\bf 40}
(1991), 1397--1420.

\bibitem{ho} K.-P. Ho, Characterization of $BMO$ in terms of
rearrangement-invariant Banach function spaces, Expo. Math.,
{\bf 27} (2009), 363--372.

\bibitem{hly} G. Hu, H. Lin, and D. Yang, Commutators of the
Hardy-Littlewood maximal operator with BMO symbols on spaces of
homogeneous type, Abstr. Appl. Anal., 2008, Art. ID 237937, 21
pages.

\bibitem{hy} G. Hu and D. Yang, Maximal commutators of BMO
functions and singular integral operators with non-smooth kernels on
spaces of homogeneous type, J. Math. Anal. Appl., {\bf 354} (2009),
249--262.

\bibitem{i1} M. Izuki, Boundedness of sublinear operators on Herz
spaces with variable exponent and application to wavelet characterization,
Anal. Math. {\bf 36} (2010), 33--50.

\bibitem{i2} M. Izuki, Boundedness of commutators on Herz spaces
with variable exponent, Rend. Circ. Mat. Palermo, {\bf 59} (2010),
199--213.

\bibitem{is} M. Izuki and Y. Sawano, Variable Lebesgue norm estimates
for $BMO$ functions, Czechoslovak Math. J., {\bf 62 (137)} (2012),
717--727.

\bibitem{j} S. Janson, Mean oscillation and commutators of
singular integral operators, Ark. Mat., {\bf 16} (1978),
263--270.

\bibitem{jtw} S. Janson, M. Taibleson and G. Weiss, Elementary
characterization of the Morrey-Campanato spaces, Lecture Notes in
Math., {\bf 992} (1983), 101--114.

\bibitem{ms} M. Milman and T. Schonbek, Second order estimates in
interpolation theory and applications, Proc. Amer. Math. Soc., (4)
{\bf 110} (1990), 961--969.

\bibitem{p} M. Paluszy\'nski, Characterization of the Besov
spaces via the commutator operator of Coifman, Rochberg and Weiss,
Indiana Univ. Math. J., (1) {\bf 44} (1995), 1--17.

\bibitem{st1} C. Segovia and J. L. Torrea, Vector-valued
commutators and applications, Indiana Univ. Math. J., (4) {\bf 38}
(1989), 959--971.

\bibitem{zh1} P. Zhang, Multiple weighted estimates for commutators
of multilinear maximal function, Acta Math. Sinica, English Series,
(6) {\bf 31} (2015), 973--994.

\bibitem{zh2} P. Zhang, Characterization of Lipschitz spaces via
commutators of the Hardy-Littlewood maximal function, C. R. Acad. Sci.
Paris, Ser. I, (3) {\bf 355} (2017), 336--344.

\bibitem{zhw1} P. Zhang and J. L. Wu, Commutators of the
fractional maximal functions, Acta Math. Sinica, Chinese Series,
(6) {\bf 52} (2009), 1235--1238.

\bibitem{zhw2} P. Zhang and J. L. Wu, Commutators of the
fractional maximal function on variable exponent Lebesgue spaces,
Czechoslovak Math. J., {\bf 64} (2014), 183--197.

\bibitem{zhw3} P. Zhang and J. L. Wu, Commutators for the
maximal functions on Lebesgue spaces with variable exponent,
Math. Inequal. Appl., (4) {\bf 17} (2014), 1375--1386.


\end{thebibliography}
\end{document}